\documentclass[12pt]{amsart}
\usepackage{amssymb}
\usepackage[margin=1in]{geometry}
\usepackage{enumerate}

%%%%%%%%%%%%%%%%%%% OPERATOR NAMES AND SUCH %%%%%%%%%%%%%%%%%%%%%%%

\sloppy

\DeclareMathOperator{\Tr}{Tr}

\DeclareMathOperator{\diverge}{div}

\DeclareMathOperator{\curl}{curl}

\DeclareMathOperator{\Op}{Op}

\DeclareMathOperator*{\slim}{s-lim}

\renewcommand{\Re}{\hbox{{\rm Re}}\,}
\newcommand{\abs}[1]{\lvert#1\rvert}
\newcommand{\norm}[1]{\lVert#1\rVert}
\newcommand{\Norm}[1]{\left\lVert#1\right\rVert}
\newcommand{\jap}[1]{\langle#1\rangle}

%%%%%%%%%%%%%%%%%% BOLD AND CALLIGRAPHIC LETTERS %%%%%%%%%%%%%%

\newcommand{\N}{{\mathbb N}}
\newcommand{\R}{{\mathbb R}}

\newcommand{\T}{{\mathbb T}}
\newcommand{\calS}{{\mathcal S}}

\newcommand{\dd}{{\mathrm{d}}}

\newcommand{\w}{{\omega}}

\newcommand{\e}{{\varepsilon}}

\newcommand{\sph}{{{\mathbb S}^{d-1}}}

\newcommand{\wt}{\widetilde}
\newcommand{\ups}{v}
%%%%%%%%%%%%%%%% EQUATIONS %%%%%%%%%%%%%%%%%%%%%

\numberwithin{equation}{section}

\newtheorem{theorem}{Theorem}[section]
\newtheorem{lemma}[theorem]{Lemma}

\newtheorem{proposition}[theorem]{Proposition}

\theoremstyle{definition}

\theoremstyle{remark}

\newtheorem*{remark*}{\bf Remark}

\numberwithin{equation}{section}

%%%%%%%%%%%%%%%%%%%% END OF DEFINITIONS %%%%%%%%%%%%%%%%%%%%%%%%

\begin{document}

\title[Spectral density of the scattering matrix]{The spectral density of the scattering matrix 
of the magnetic Schr\"odinger operator for high energies}

\author{Daniel Bulger and Alexander Pushnitski}
\address{Department of Mathematics,
King's College London,
Strand, London, WC2R~2LS, U.K.}
\email{daniel.bulger@kcl.ac.uk}
\email{alexander.pushnitski@kcl.ac.uk}

\subjclass[2000]{Primary: 81U20; Secondary: 35P25}

\keywords{Scattering matrix, scattering phase, Schr\"odinger operator, magnetic field, spectral density}

\begin{abstract}
The scattering matrix of the 
Schr\"odinger operator with smooth short-range electric and magnetic potentials is considered.
The asymptotic density of the eigenvalues of this scattering matrix 
in the high energy regime is determined.
An explicit formula for this density is given. 
This formula involves only the magnetic vector-potential.
\end{abstract}

\maketitle
%%%%%%%%%%%%%%%%%%%%%%%%%%%%%%%%%%%%%%%%
\section{Main result and discussion}
\label{sec.a}
%%%%%%%%%%%%%%%%%%%%%%%%%%%%%%%%%%%%%%%%
%%%%%%%%%%%%%%%%%%%%%%%%%%%%%%%%
\subsection{Main result}
\label{sec.a1}
%%%%%%%%%%%%%%%%%%%%%%%%%%%%%%%
%%%%%%%%%%%%%%%%%%%%%%%%%%%%%%%%%%%%%%%%%
Consider the Schr\"odinger operator
$$
H = (i\nabla + A)^{2} + V
\quad\text{in}\quad
L^2(\R^d), \quad d\geq2;
$$
here $V:\R^d\to\R$ is an electric potential and 
$A = (A_{1},\dots,A_{d}):\R^d\to\R^d$ is a magnetic vector-potential.
We assume that both $V$ and $A$ are infinitely differentiable and
satisfy the estimates
%%%%%%%%%%%%%%%%%%%%%%%%%%%%%%%
\begin{equation}
\label{eq.a1}
%%%%%%%%%%%%%%%%%%%%%%
\vert\partial^{\alpha}V(x)\vert\leq C_{\alpha}\jap{x}^{-\rho-\vert\alpha\vert},
\qquad
\vert\partial^{\alpha}A(x)\vert\leq C_{\alpha}\jap{x}^{-\rho-\vert\alpha\vert},
\qquad
\rho>1,
%%%%%%%%%%%%%
\end{equation}
%%%%%%%%%%%%%%%%%%%%%%
for all multi-indices $\alpha$; here $\jap{x}=(1+\abs{x}^2)^{1/2}$. 
Let $H_0=-\Delta$; we denote by $S(k)$ the scattering matrix associated 
with the pair $H,H_{0}$ at the energy $\lambda = k^{2}>0$. 
We recall the precise definition of the scattering matrix in Section~\ref{sec.a2}; 
here we only note that $S(k)$ is a unitary operator in $L^2(\sph)$ and 
the operator  $S(k) - I$ is compact (see e.g. \cite{Ya2,Ya4}).
Thus, the spectrum of $S(k)$ consists of eigenvalues on the unit circle
$\T$; all eigenvalues (apart from possibly 1) have finite multiplicities
and can accumulate only to 1. 
Our aim is to describe the asymptotic behaviour of these eigenvalues
as $k\to\infty$.

If $A\equiv0$, one has
\begin{equation}
\label{eq.a11a}
%%%%%%%%%%
\norm{S(k) - I} = O(k^{-1}),\quad k\to\infty \qquad (A\equiv0),
%%%%%%%
\end{equation}
and so the eigenvalues of $S(k)$ are located on an arc near
1 of length $O(k^{-1})$. 
The large energy asymptotics of $S(k)$ in this case is given by the 
Born approximation, see e.g. \cite[Chapter~8]{Ya2}; 
this makes the analysis of $S(k)$ rather explicit.
In \cite{BP1}, using the Born approximation, we 
have determined the large energy asymptotic density of 
the spectrum of $S(k)$  for $A\equiv0$; we will say more about 
this in the next subsection. 

When $A\not\equiv0$, the 
situation is radically different: as $k\to\infty$, the norm 
$\norm{S(k)-I}$ does \emph{not} tend to zero and the 
Born approximation is no longer valid. 
The high energy asymptotic expansion of the scattering amplitude (= the integral 
kernel of $S(k)-I$) for the magnetic Schr\"odinger operator was obtained
(through a very difficult proof)  in \cite{Ya3}.
Our main result below is merely a spectral consequence of \cite{Ya3}.

We need some notation. 
For any $\w\in\sph$, let $\Lambda_{\w}\subset\R^{d}$ be
the hyperplane passing through the origin and orthogonal to $\w$:
$$
\Lambda_\w=\{x\in\R^d: \jap{x,\w}=0\}.
$$
We equip both $\sph$ and $\Lambda_{\w}$ with the standard 
$(d-1)$-dimensional Lebesgue measure. We set
\begin{equation}
\label{eq.a4}
M(\w,\xi) 
= 
\int_{-\infty}^{\infty}
\jap{A(t\w+\xi),\w}\dd t,\quad\w\in\sph,
\quad\xi\in\R^d.
\end{equation}
Our main result is as follows:
%%%%%%%%%%%%%%%%%%
\begin{theorem}
\label{thm.a1}
%%%%%%%%%%%%%%%%%%
Let $V$, $A$ satisfy \eqref{eq.a1}.
Then for any function $\varphi\in C(\mathbb{T})$ that vanishes 
in a neighbourhood of the point $1$,
\begin{equation}
\label{eq.a8}
\lim_{k\rightarrow\infty}k^{-d+1}\Tr\varphi(S(k)) 
= 
(2\pi)^{-d+1}
\int_\sph
\int_{\Lambda_\w}\varphi(e^{iM(\w,\xi)})\dd \xi \dd\w.
\end{equation}
\end{theorem}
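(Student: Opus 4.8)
The strategy is to reduce the statement to the known high-energy asymptotics of the scattering amplitude from Yafaev's paper \cite{Ya3}, and then to extract the spectral consequence via an approximation argument. The key point is that, as $k\to\infty$, the scattering matrix $S(k)$ is, up to an error that is negligible for the purposes of computing $\Tr\varphi(S(k))$, unitarily equivalent to a ``model'' operator whose symbol is the function $\w\mapsto e^{iM(\w,\xi)}$ appearing on the right-hand side of \eqref{eq.a8}.

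\textbf{Step 1: The model operator.} From the high-energy expansion in \cite{Ya3} one knows that the scattering amplitude, after a suitable rescaling, behaves to leading order like the kernel of a pseudodifferential operator (or an oscillatory-integral operator) on $L^2(\sph)$ whose principal symbol, once the fast oscillation is stripped away, is $e^{iM(\w,\xi)}$, where the cotangent variable $\xi$ is identified (via the spectral measure of $H_0$ at energy $k^2$) with a point of the hyperplane $\Lambda_\w$. Concretely, I would introduce the operator $T(k)$ on $L^2(\sph)$ obtained by quantizing $e^{iM}$ and show, using \cite{Ya3} together with the asymptotics \eqref{eq.a11a} for the $A\equiv 0$ part (which controls the $V$-contribution), that $\norm{S(k) - T(k)}\to 0$ or, more precisely, that $\Tr\bigl(\varphi(S(k)) - \varphi(T(k))\bigr) = o(k^{d-1})$ for every admissible $\varphi$. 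Since $\varphi$ vanishes near $1$ we may write $\varphi(z) = (z-1)\psi(z)$ with $\psi\in C(\T)$, so bounds on $S(k)-T(k)$ in a suitable operator ideal (combined with uniform bounds on the relevant resolvents/approximants) feed directly into trace-norm estimates for the difference of the functional calculi; a standard Stone--Weierstrass / polynomial approximation in $z$ and $\bar z$ on $\T$ reduces everything to controlling $\Tr\bigl((S(k)-I)^m(S(k)^*-I)^n\bigr)$ versus the same for $T(k)$.

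\textbf{Step 2: Weyl asymptotics for the model operator.} For the model operator $T(k)$ the trace $\Tr\varphi(T(k))$ is computed by a semiclassical (large-parameter) Weyl law: $\Tr\varphi(T(k)) \sim (2\pi)^{-d+1} k^{d-1}\int\!\!\int \varphi(e^{iM(\w,\xi)})\,\dd\xi\,\dd\w$, the power $k^{d-1}$ being the volume of the rescaled phase space over $\sph$, and the measure $\dd\xi\,\dd\w$ on $\bigcup_\w (\{\w\}\times\Lambda_\w)$ being exactly the Liouville measure transported through the identification in Step 1. The rapid decay \eqref{eq.a1} with $\rho>1$ guarantees that $M(\w,\xi)$ is well-defined, smooth, and decays in $\xi$, so the $\xi$-integral converges and the usual pseudodifferential Weyl-law machinery (cf.\ the approach of \cite{BP1} in the $A\equiv 0$ case, and standard references on the spectral density of pseudodifferential operators on compact manifolds) applies with the symbol $e^{iM(\w,\xi)}-1$, which is compactly controlled because $\varphi$ kills the value $1$.

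\textbf{Main obstacle.} The principal difficulty is Step 1: making precise the sense in which $S(k)$ is approximated by the clean model operator $T(k)$, and proving the approximation is good enough in an operator-ideal norm (not merely in operator norm) to be compatible with the $o(k^{d-1})$ trace asymptotics. Yafaev's expansion in \cite{Ya3} gives the scattering amplitude with remainder estimates, but these are pointwise/kernel estimates and must be converted into Schatten-class or trace-norm bounds, uniformly in $k$, and one must check that the subleading terms in the expansion (including all $V$-dependent and lower-order $A$-dependent contributions) contribute only $o(k^{d-1})$ to the trace. Handling the region near the ``forward direction'' $\w' = \w$, where the scattering amplitude is most singular, and the region where $M(\w,\xi)$ is small (so $e^{iM}$ is near $1$ and $\varphi$ is near its zero set) requires care; here the hypothesis that $\varphi$ vanishes in a \emph{neighbourhood} of $1$ is essential and should be exploited to cut off exactly the problematic part of phase space.
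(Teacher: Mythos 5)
Your plan follows the same route as the paper (Yafaev's high-energy expansion $\Rightarrow$ $S(k)$ as a semiclassical $\Psi$DO on $\sph$ with principal symbol $e^{iM(\w,\xi)}-1$ $\Rightarrow$ semiclassical trace asymptotics for mixed moments $\Rightarrow$ polynomial approximation), but the central step is not actually carried out: what you call the ``main obstacle'' in Step 1 \emph{is} the proof. The paper resolves it (Lemma~\ref{prop.b6}) by taking Yafaev's explicit approximation $s_0^{(N)}$ of the kernel, built from the eikonal phases $\phi_\pm$ and transport amplitudes $\ups^{(N)}_\pm$, rewriting it as an oscillatory integral with an amplitude over $\Lambda_{\w_0}$, changing variables to $\xi\in\Lambda_\w$, verifying uniform symbol estimates in ${\mathcal S}(\jap{\xi}^{-m})$ with $m=\min\{1,\rho-1\}$ via Proposition~\ref{prop.b4}, identifying the diagonal value of the amplitude as $e^{iM(\w,\xi)}-1$, and absorbing all errors into an operator with $C^q$ kernel of size $O(k^{-q})$, whose trace norm is then $O(1)$ for $q$ large. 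None of this is replaced by anything in your outline, so as it stands the argument does not establish \eqref{eq.a8}.

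Beyond that, several specific sub-claims are too weak or misdirected. Operator-norm closeness $\norm{S(k)-T(k)}\to0$ would give nothing for traces, and defining $\varphi(T(k))$ for a non-normal quantization of $e^{iM}$ is itself problematic; the paper never forms functions of a model operator, working only with $\Tr\bigl((S(k)-I)^{\ell_1}(S(k)^*-I)^{\ell_2}\bigr)$. Writing $\varphi(z)=(z-1)\psi(z)$ with a single factor is insufficient: $S(k)-I$ need not be trace class (its symbol decays only like $\jap{\xi}^{-m}$), so one must take $\ell_1+\ell_2>(d-1)/m$ factors and prove a correspondingly strengthened Weierstrass-type lemma in which the approximating polynomials in $w=z-1$, $\overline w$ contain only monomials of total degree at least $\ell_0$ (Lemma~\ref{lem.c3}); this is exactly where the hypothesis that $\varphi$ vanishes in a neighbourhood of $1$ enters, since then $\varphi(z)\abs{z-1}^{-\ell_0}$ is continuous for every $\ell_0$. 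Finally, invoking \eqref{eq.a11a} to ``control the $V$-contribution'' does not work when $A\not\equiv0$: the Born approximation fails there, and in the paper the electric potential is eliminated not by any operator estimate but because it only enters the subprincipal symbol $\sigma_1$ in \eqref{eq.b17}, which drops out of the leading trace asymptotics of Proposition~\ref{prop.b2}.
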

Under the assumptions of Theorem~\ref{thm.a1}, 
the operator $\varphi(S(k))$ has a finite rank, 
and so the trace in the l.h.s. exists; also, the integrand
in the r.h.s. vanishes for all sufficiently large $\abs{\xi}$, and so 
the integral is absolutely convergent. 

The striking feature of formula \eqref{eq.a8} is that 
its r.h.s.\  \emph{does not depend on the electric potential $V$}.

\subsection{Discussion}

\emph{Weak convergence of measures:}
Theorem~\ref{thm.a1} can be rephrased in terms of weak convergence
of the eigenvalue counting measures. 
Let us denote the eigenvalues of $S(k)$ by $e^{i\theta_n(k)}$, where 
$n\in\N$ and 
$\theta_{n}(k)\in [-\pi,\pi)$; as usual, the eigenvalues are counted
with multiplicities taken into account. 
Let $\iota\subset \T\setminus\{1\}$ be any open arc separated away from $1$; 
we set 
\begin{align}
\label{eq.muk}
\mu_{k}(\iota) 
&= 
\#\{n\in\N : e^{i\theta_n(k)}\in\iota\}, \quad k>0,
\\
\mu(\iota)
&=
(2\pi)^{-d+1} 
\int_\sph
\int_{\Pi_\w(\iota)} \dd \xi \dd\w, 
\qquad
\Pi_\w(\iota)=\{\xi\in\Lambda_\w: e^{iM(\w,\xi)}\in\iota\},
\label{eq.a6}
\end{align}
where 
$\#$ represents the number of elements in a set.
The measures $\mu_k$, $\mu$ can be alternatively defined by 
requiring that
\begin{align}
\Tr\varphi(S(k))
&=
\int_\T \varphi(z)\dd\mu_k(z), 
\label{eq.a7a}
\\
(2\pi)^{-d+1}
\int_\sph
\int_{\Lambda_\w}\varphi(e^{iM(\w,\xi)})\dd \xi \dd\w
&=
\int_{\mathbb{T}}\varphi(z)\dd\mu(z),
\label{eq.a7b}
\end{align}
for any continuous function $\varphi$ on $\T$ vanishing in a 
neighbourhood of $1$. 
With this notation,  Theorem~\ref{thm.a1} can be rephrased as  the weak convergence
$$
k^{-d+1}\mu_k\to\mu, \quad k\to\infty. 
$$
The measure $\mu$ may be singular at $1$, i.e.  
$\mu(\T\setminus\{1\})$ may be infinite, but
%%%%%%%%%%%%%%%%%%%%%%%%%%%
\begin{equation}
\label{eq.a7}
%%%%%%%%%%%%%%%%%%%%%%%%%%%
\int_{\mathbb{T}}\vert z-1\vert^{\ell}\dd\mu(z)<+\infty,\quad\forall\ell>(d-1)/(\rho-1).
%%%%%%%%%%%%%%%%%%%%%%%%%%%
\end{equation}
The relation \eqref{eq.a7} follows from the elementary 
estimate on $M(\w,\xi)$ which is  a direct consequence of \eqref{eq.a1}:
%%%%%%%%%%%%%%%%%%%%%%%%%%%
\begin{equation}
\label{eq.a5}
%%%%%%%%%%%%%%%%%%%%%%%%%%%
\vert M(\w,\xi)\vert \leq C(A)(1+\vert\xi\vert)^{1-\rho},\quad\w\in\sph,\quad\xi\in\Lambda_{\w}.
%%%%%%%%%%%%%%%%%%%%%%%%%%%
\end{equation}

\emph{The case $A\equiv0$:}
In this case, the limiting measure $\mu$ vanishes, and in order to obtain
non-trivial asymptotics of $\mu_k$, the problem requires an appropriate 
rescaling. 
By \eqref{eq.a11a}, the spectrum of $S(k)$ consists of 
eigenvalues which lie on an arc of length $O(k^{-1})$ around $1$. 
This suggests the following rescaled version of the problem: 
for an interval $\delta\subset\R\setminus\{0\}$ separated away from zero, 
set 
$$
\wt \mu_k(\delta)
=
\#\{n\in\N : k\theta_{n}(k)\in\delta\}.
$$
Then it turns out (see \cite{BP1}) that 
\begin{equation}
k^{-d+1}\wt \mu_k\to\wt\mu
\quad\text{weakly as} \quad
k\to\infty,
\label{eq.a11b}
\end{equation}
where the measure $\wt\mu$ is defined as follows. 
Let
%%%%%%%%%%%%%%%%
\begin{equation}
X(\w,\xi) = -\frac{1}{2}\int_{-\infty}^{\infty}V(t\w + \xi)\dd t,\quad\w\in\sph,\quad\xi\in\Lambda_{\w};
\label{eq.a11c}
\end{equation}
%%%%%%%%%%%%%%%%%%%%
then
%%%%%%%%%%%%%%%%
\begin{equation*}
\wt{\mu}(\delta) 
= 
(2\pi)^{-d+1}
\int_\sph
\int_{\wt\Pi_\w(\iota)} \dd \xi \dd\w, 
\qquad
\wt\Pi_\w(\iota)=\{\xi\in\Lambda_\w: X(\w,\xi)\in\delta\},
\end{equation*}
%%%%%%%%%%%%%%%%%%%%
where $\delta\subset\R\backslash\{ 0\}$.

\emph{Two and three dimensional cases:}
Let us discuss assumptions \eqref{eq.a1} in dimensions $d=2,3$. 
In dimension $d=3$, a magnetic
vector-potential $A$ satisfying \eqref{eq.a1} can be constructed for any 
smooth magnetic field $B:\R^3\to\R^3$ ($B=\curl A$) such that 
$\diverge B=0$ and 
\begin{equation}
\abs{\partial^\alpha B(x)}\leq C_\alpha \jap{x}^{-\rho-1-\abs{\alpha}}, 
\quad 
\rho>1. 
\label{eq.a2a}
\end{equation}
In dimension $d=2$, the magnetic field $B:\R^2\to\R$, in addition to 
\eqref{eq.a2a}, must satisfy the zero flux condition 
\begin{equation}
\Phi=\int_{\R^2} B(x)\dd x=0, 
\quad
B(x)=\frac{\partial A_1}{\partial x_2}-\frac{\partial A_2}{\partial x_1}. 
\label{eq.a2b}
\end{equation}
See \cite{Ya4} for the details of this construction.
The zero flux condition is unavoidable in the following sense. 
Let $B:\R^2\to\R$ be given which satisfies the estimates \eqref{eq.a2a} but
the flux $\Phi\not=0$. 
Then any magnetic vector-potential $A(x)$ for this field will necessarily fail to be
short-range (i.e. \eqref{eq.a1} fails) but one can construct $A(x)$ with the behaviour
$\abs{A(x)}\sim \abs{x}^{-1}$ as $\abs{x}\to\infty$. 
In this case the scattering theory for $H_0$, $H$ can still be constructed, but the 
difference $S(k)-I$ will \emph{not} be compact, see \cite{Ya4} for a detailed 
discussion and a description of the essential spectrum of $S(k)$. 
A particularly well known example of this is the Aharonov-Bohm effect \cite{AhBo}. 
Thus, in this case the measure $\mu_k$ cannot even be defined and 
the question of the spectral asymptotics of the scattering matrix 
cannot be approached in the same way as in Theorem~\ref{thm.a1}.

\emph{Gauge invariance:}
The scattering matrix is gauge invariant in the class of short-range 
magnetic vector-potentials. More precisely, if 
%%%%%%%%
\begin{equation}
\wt{A}(x) = A(x) + \nabla\phi(x), 
\quad\text{where}\quad
\phi(x)\to0
\quad\text{as}\quad
\abs{x}\to\infty,
\label{eq.a3a}
\end{equation}
%%%%%%%%%%
then the scattering matrix $\wt S(k)$ associated with the pair 
$\wt H=(i\nabla+\wt A)^2+V$, $H_0=-\Delta$, 
coincides with $S(k)$. See \cite{Ru,Ya4} for further details
and for an interesting discussion of examples when $S(k)$ is not 
gauge invariant (when $\phi(x)$ is homogeneous in $\abs{x}$ of order zero). 
An inspection shows that $M(\w,\xi)$ is gauge invariant under the gauge
transformations of the class \eqref{eq.a3a}.

\emph{Related work:}
Although the study of various aspects of asymptotic distribution of eigenvalues
of differential operators has become a well-developed industry, very little
attention in mathematical literature has been devoted to analogous questions 
for the scattering matrix.  
There has been, of course, much work on the ``average'' characteristics
of the scattering matrix $S(k)$: on the scattering phase
$\arg\det S(k)$ and on the total scattering cross section; but the 
study of the distribution of individual eigenvalues of $S(k)$ has been less popular.
We are only aware of mathematical works \cite{Ko,BYa1,BYa2,ZZ} on this subject. 
In \cite{Ko}, the asymptotic behaviour of the phases $\theta_n(k)$
was determined for a fixed $k$ and $n\to\infty$ for potentials $V$ with 
compact support and $A\equiv0$. 
In \cite{BYa1}, the same problem was considered in the case of potentials 
$V$ with a power asymptotics at infinity. In \cite{BYa2}, this question was
studied in the presence of a periodic background potential.
In \cite{ZZ}, the spacing between the phases $\theta_n(k)$ was studied
in a rather special model of scattering on manifolds.

In the physics community, the question of asymptotic distribution of 
eigenvalues of the scattering matrix has been addressed; see e.g.
the works by  U.~Smilansky and his collaborators \cite{Smi} 
on the eigenvalue statistics of $S(k)$ for obstacle scattering. 

The limiting measure $\mu$ arises via integration over straight lines,
i.e. over the trajectories of the free dynamics, see \eqref{eq.a4}.
Similar in nature asymptotic formulas are known in other 
problems, involving the discrete spectrum of differential and 
pseudodifferential operators; see \cite{wein,tvb,urvb,PRVB}.

%%%%%%%%%%%%%%%%%%%%%%%%
%%%%%%%%%%%%%%%%%%%%%%%%%%%
\subsection{Key steps of the proof of Theorem~\ref{thm.a1}}
\label{sec.a6}
%%%%%%%%%%%%%%%%%%%%%%%%%%%
Our proof is heavily based on the results of \cite{Ya3}.
In \cite{Ya3}, D.~Yafaev suggested 
a high energy asymptotic expansion for the integral kernel of the scattering 
matrix $S(k)$.
This expansion is constructed via approximate scattering 
solutions to the stationary Schr\"odinger equation
%%%%%%%%%%
\begin{equation*}
Hu = k^{2}u.
\end{equation*}
%%%%%%%%%%%%%%
We recall this construction in Section~\ref{sec.b}. 
Using Yafaev's expansion, it is easy to represent
the scattering matrix $S(k)$ (modulo a negligible error) 
as a semiclassical pseudodifferential operator ($\Psi$DO)
on the sphere with the semiclassical parameter $h=k^{-1}$ 
and the principal symbol $e^{iM(\w,\xi)}$. 
This representation is already present, in a somewhat different 
form, in \cite{Ya3}; we re-derive it in Section~\ref{sec.b}
in the form convenient for our purposes (see Lemma~\ref{prop.b6}). 
After this, using the standard pseudodifferential techniques, we
prove (see Lemma~\ref{prop.c1}) the asymptotic formula
\begin{equation}
\label{eq.c1}
\lim_{k\to\infty}k^{-d+1}
\Tr(S(k)-I)^{\ell_1}(S(k)^*-I)^{\ell_2}
= 
(2\pi)^{-d+1}
\int_\sph \int_{\Lambda_\omega}
(e^{iM(\omega,\xi)}-I)^{\ell_1}(e^{-iM(\omega,\xi)}-I)^{\ell_2}\dd \xi \dd\omega
\end{equation}
for any integers $\ell_1\geq0$, $\ell_2\geq0$ such that 
the sum $\ell_1+\ell_2$ is sufficiently large. 
From here it is easy to derive the main result by an application of the 
Weierstrass approximation theorem; this is done in Section~\ref{sec.c}.

The difference between the case $A\equiv0$ and the general case can be understood as 
follows. As mentioned above, the scattering matrix can be approximated by a $\Psi$DO 
on the sphere with a symbol which depends on $k$. 
The leading term of the asymptotic expansion of this symbol in powers of $k^{-1}$ 
is $e^{iM(\omega,\xi)}$; this term involves only the magnetic vector potential $A$.
The electric potential turns up only in the next term of the expansion, which 
(if $A\equiv0$) has the form $ik^{-1}X(\omega,\xi)$ (see \eqref{eq.a11c}). 
Of course, this is related to the fact that the magnetic potential
is a perturbation of the order 1 (as a differential operator) of $-\Delta$, whereas
the electric field is a perturbation of the order 0.

%%%%%%%%%%%%%%%%%%%%%%%%%%%%%%%%%%%%%%%%%%%%%%%%%%%
%%%%%%%%%%%%%%%%%%%%%%%%%%%%%%%%%%%%%%%%%%%%%%%%%%
\section{The scattering matrix as a \protect$\Psi$DO on the sphere.}
\label{sec.b}
%%%%%%%%%%%%%%%%%%%%%%%%%%%%%%%%%%%%%%%%%%%%%%%%%

%%%%%%%%%%%%%%%%%%%%%%%%%%%%%%%%%%%%%
\subsection{The scattering matrix}
\label{sec.a2}
%%%%%%%%%%%%%%%%%%%%%%%%%%%%%%%%%%%%%
Let us briefly recall the definition of the scattering matrix; for the details, 
we refer to any textbook on scattering theory, e.g. \cite{Ya2}.
It is one of the fundamental facts of scattering theory that under the conditions 
\eqref{eq.a1}, the wave operators
\begin{equation*}
W_{\pm} = \slim_{t\to\pm\infty}e^{itH}e^{-itH_{0}}
\end{equation*}
exist and are complete.
The scattering operator $\mathbf{S} = W_{+}^{*}W_{-}$ is 
unitary in $L^{2}(\R^{d})$ and commutes with $H_{0}$. Let 
$F: L^2(\R^d)\to L^2((0,\infty);L^2(\sph))$ be the unitary operator
$$
(Fu)(k,\omega)=\frac1{\sqrt{2}}k^{(d-2)/2}\widehat u(k\omega), 
\quad k>0,\quad \omega\in\sph,
$$
where $\widehat u$ is the usual (unitary) Fourier transform of $u$. 
The operator $F$ diagonalises $H_0$, i.e.
$$
(FH_0 u)(k,\omega)
=
k^{2} (Fu)(k,\omega),
\quad
\forall u\in C_0^\infty(\R^{d}).
$$
The commutation relation $\mathbf SH_0=H_0\mathbf S$ 
implies that $F$ also diagonalises $\mathbf S$, i.e.
$$
(F\mathbf Su)(k,\cdot)=S(k)(Fu)(k,\cdot),
$$
where $S(k):L^2(\sph)\to L^2(\sph)$ 
is the unitary operator known as the (on-shell) scattering matrix.

%%%%%%%%%%%%%%%%%%%%%%%%%%
\subsection{Pseudodifferential operators on the sphere}
\label{sec.b1}
%%%%%%%%%%%%%%%%%%%%%%%%%%%%%
For every $\w\in\sph$, we identify the cotangent space $T_{\w}^{*}\sph$ 
with the plane $\Lambda_{\w} = \{x\in\R^{d} : \jap{x,\w} = 0\}$
in a standard way.
For a symbol $\sigma\in C_{0}^{\infty}(T^{*}\sph)$ and a semiclassical parameter 
$h\in(0,1)$, the semiclassical $\Psi$DO $\Op_{h}[\sigma]$ in $L^{2}(\sph)$ is defined via its integral kernel
%%%%%%%%%%%%%%%%%%
\begin{equation}
\label{eq.b1}
%%%%%%%%%%%%%%%%%%%%
\Op_{h}[\sigma](\w,\w') = (2\pi h)^{-d+1}\int_{\Lambda_{\w}}e^{-i\jap{\w-\w',\xi}/h}\sigma(\w,\xi)\dd\xi,
%%%%%%%%%%%%
\end{equation}
%%%%%%%%%%%%%%%%%
where $\w,\w'\in\sph$. 
This definition can be extended in a standard way to symbols $\sigma$ satisfying
%%%%%%%%%%%%
\begin{equation}
\label{eq.b2}
%%%%%%%%%%%%%%%
\vert\partial_{\xi}^{\alpha}\partial_{\w}^{\beta}\sigma(\w,\xi)\vert\leq C_{\alpha\beta}\jap{\xi}^{-m-\vert\alpha\vert},\quad\w\in\sph,\quad\xi\in\Lambda_{\w},
%%%%%%%%%%%%%%%%%%%%%
\end{equation}
%%%%%%%%%%%%%%%%%%%%%%
for some $m\in\R$ and all multi-indices $\alpha,\beta$. 
We will only be interested in the case $m>0$; then 
by the Calderon-Villancourt theorem (see e.g. \cite{Taylor})
combined with a scaling argument, $\Op_{h}[\sigma]$ is bounded and 
\begin{equation}
\label{eq.b3b}
%%%%%%%%%%%%%%%%%
\sup_{0<h<1}\norm{\Op_{h}[\sigma]} 
\leq C(\sigma).
%%%%%%%%%%%%%%%%
\end{equation}
Next, if $\sigma$ satisfies \eqref{eq.b2} with $m>d-1$, then 
(see e.g. \cite{SH1,DSj})
$\Op_{h}[\sigma]$ is trace class and its trace can be computed by integrating 
the kernel \eqref{eq.b1} over the diagonal:
%%%%%%%%
\begin{equation}
\label{eq.b3}
%%%%%%%%%%%%%%%%%
\Tr(\Op_{h}[\sigma]) = (2\pi h)^{-d+1}\int_{\sph}\int_{\Lambda_{\w}}\sigma(\w,\xi)\dd\xi\dd\w.
%%%%%%%%%%%%%%%%
\end{equation}

We will also be interested in symbols which depend on $h$. For $m> 0$, 
let $\calS(\jap{\xi}^{-m})$ be the class of $C^{\infty}$-smooth symbols 
$\sigma = \sigma(\w,\xi,h)$, $h\in(0,1)$, satisfying
the estimate \eqref{eq.b2} uniformly in $h\in(0,1)$ for all multi-indices $\alpha,\beta$.
We will need a standard statement about the leading term spectral asymptotics of a 
semiclassical $\Psi$DO:
%%%%%%%%%%%%%%
\begin{proposition}
\label{prop.b2}
%%%%%%%%%%%%%%%%%%%
Let $m>0$ and let $\sigma\in {\mathcal S}(\jap{\xi}^{-m})$ be a symbol which admits the representation
%%%%%%%%%%%%%%%%
\begin{equation}
\label{eq.b5}
%%%%%%%%%%%%%
\sigma = \sigma_{0} + h\sigma_{1}
\end{equation}
%%%%%%%%%%%%%%%%%
with $\sigma_{0},\sigma_{1}\in {\mathcal S}(\jap{\xi}^{-m})$ and $\sigma_{0}$ independent of $h$. 
Then for any non-negative integers $\ell_1$, $\ell_2$ such that 
$\ell_1+\ell_2>\frac{d-1}{m}$, the operator 
$(\Op_{h}[\sigma])^{\ell_1}\bigl((\Op_{h}[\sigma])^*\bigr)^{\ell_2}$ 
belongs to the trace class and 
%%%%%%%%%%%%%%%%
\begin{equation}
\label{eq.b6}
\lim_{h\to+0}(2\pi h)^{d-1}
\Tr\bigl((\Op_{h}[\sigma])^{\ell_1}\bigl((\Op_{h}[\sigma])^*\bigr)^{\ell_2}\bigr)
= 
\int_{\sph}\int_{\Lambda_{\w}}
\sigma_{0}(\w,\xi)^{\ell_1}\overline{\sigma_{0}(\w,\xi)^{\ell_2}}
\dd\xi\dd\w.
\end{equation}
%%%%%%%%%%%%%%%%%
\end{proposition}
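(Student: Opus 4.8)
The plan is to reduce the trace asymptotics to the trace-class formula \eqref{eq.b3} by expanding the product $(\Op_h[\sigma])^{\ell_1}\bigl((\Op_h[\sigma])^*\bigr)^{\ell_2}$ into a single $\Psi$DO via the symbol calculus, and then tracking which terms survive in the limit $h\to+0$. First I would record the composition rule: for symbols in $\calS(\jap{\xi}^{-m})$ with $m>0$, the product $\Op_h[\sigma]\Op_h[\tau]$ equals $\Op_h[\sigma\# \tau]$, where $\sigma\#\tau = \sigma\tau + h r$ with $\sigma\tau\in\calS(\jap{\xi}^{-2m})$ and the remainder $r\in\calS(\jap{\xi}^{-2m})$ (here the orders \emph{add}, which is the key gain); similarly the adjoint $(\Op_h[\sigma])^* = \Op_h[\sigma^*]$ with $\sigma^* = \bar\sigma + hr'$, $r'\in\calS(\jap{\xi}^{-m})$. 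Iterating this $\ell_1+\ell_2-1$ times, I obtain
\begin{equation*}
(\Op_h[\sigma])^{\ell_1}\bigl((\Op_h[\sigma])^*\bigr)^{\ell_2} = \Op_h[\tau_h],
\qquad
\tau_h = \sigma_0^{\ell_1}\,\overline{\sigma_0}^{\,\ell_2} + h\,\tau_h',
\end{equation*}
with $\sigma_0^{\ell_1}\overline{\sigma_0}^{\,\ell_2}\in\calS(\jap{\xi}^{-(\ell_1+\ell_2)m})$ and $\tau_h'\in\calS(\jap{\xi}^{-(\ell_1+\ell_2)m})$ uniformly in $h$. The hypothesis $\ell_1+\ell_2 > (d-1)/m$, i.e. $(\ell_1+\ell_2)m > d-1$, is exactly what makes $\tau_h$ satisfy \eqref{eq.b2} with an order exceeding $d-1$, so by the cited trace-class criterion $\Op_h[\tau_h]$ is trace class — this gives the trace-class claim.

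Next I would apply the trace formula \eqref{eq.b3} to $\Op_h[\tau_h]$, obtaining
\begin{equation*}
(2\pi h)^{d-1}\Tr\bigl((\Op_h[\sigma])^{\ell_1}\bigl((\Op_h[\sigma])^*\bigr)^{\ell_2}\bigr)
= \int_\sph\int_{\Lambda_\w}\sigma_0(\w,\xi)^{\ell_1}\overline{\sigma_0(\w,\xi)}^{\,\ell_2}\,\dd\xi\,\dd\w
+ h\int_\sph\int_{\Lambda_\w}\tau_h'(\w,\xi)\,\dd\xi\,\dd\w.
\end{equation*}
The first integral is the desired right-hand side of \eqref{eq.b6} and is finite since its integrand is integrable on each $\Lambda_\w$ (order $> d-1$) with a bound uniform in $\w\in\sph$, a compact set. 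For the second term, the uniform-in-$h$ estimate \eqref{eq.b2} for $\tau_h'$ with order $(\ell_1+\ell_2)m > d-1$ gives $\bigl|\int_\sph\int_{\Lambda_\w}\tau_h'\,\dd\xi\,\dd\w\bigr| \le C$ uniformly in $h$, so the error term is $O(h)$ and vanishes as $h\to+0$. This completes the proof.

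The main obstacle — really the only nonroutine point — is establishing the symbol composition estimates with the stated \emph{additivity of orders} and with remainders lying in $\calS$ of the correct decay order, uniformly in $h$, for the non-compactly-supported symbol classes \eqref{eq.b2}. The usual semiclassical calculus is stated for compactly supported symbols or for the standard (scalar) classes; here one works with symbols on $T^*\sph$ whose $\xi$-decay is polynomial, so one must either invoke a version of the calculus adapted to these classes or derive the composition and remainder bounds directly from the oscillatory-integral representation \eqref{eq.b1}, using a non-stationary phase / integration-by-parts argument in the $\xi$-variable and a scaling argument as was already used to get \eqref{eq.b3b}. Once this calculus is in place, the remaining manipulations — the binomial-type iteration producing $\tau_h = \sigma_0^{\ell_1}\overline{\sigma_0}^{\,\ell_2} + h\tau_h'$, the application of \eqref{eq.b3}, and the $O(h)$ bound on the error — are entirely standard, and I would present them briefly.
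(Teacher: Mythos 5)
Your proposal follows essentially the same route as the paper's sketch: expand the product via the semiclassical composition and adjoint calculus for the classes $\calS(\jap{\xi}^{-m})$ (with orders adding), identify the leading symbol $\sigma_{0}^{\ell_1}\overline{\sigma_{0}^{\ell_2}}$ of order $(\ell_1+\ell_2)m>d-1$, and conclude by the trace formula \eqref{eq.b3}. The only cosmetic difference is that the paper, working in local coordinates on the sphere, represents the product as $\Op_h[\varkappa]+R_N(h)$ with an explicit smoothing remainder $R_N(h)$ (kernel in $C^N$, norm $O(h^N)$) rather than claiming an exact single-symbol representation; this remainder is harmless in the trace, and your acknowledged need to adapt the calculus to these symbol classes on $\sph$ is precisely where it arises.
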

%%%%%%%%%%%%%%%%%
\begin{proof}[Sketch of proof] 
First suppose that $m>d-1$ and $\ell_1= 1$, $\ell_2=0$.  
Then $\Op_{h}[\sigma]$ is trace class
and the asymptotics \eqref{eq.b6} follow by substituting the representation 
\eqref{eq.b5} into \eqref{eq.b3}. 
In the general case, using local coordinates on the sphere, iterating a standard composition 
formula for $\Psi$DO in $L^{2}(\R^{d-1})$, and using the formula for the symbol of the adjoint
operator, we obtain that for any $N>0$, 
%%%%%%%%%%
\begin{equation*}
(\Op_{h}[\sigma])^{\ell_1}\bigl((\Op_{h}[\sigma])^*\bigr)^{\ell_2}
= 
\Op_{h}[\varkappa] + R_N(h),
\end{equation*}
%%%%%%%%%%%
where:
\begin{enumerate}[(i)]
\item
the symbol $\varkappa\in {\mathcal S}(\jap{\xi}^{-m\ell_1-m\ell_2})$ can be represented as 
%%%%%%%%%%
\begin{equation*}
\varkappa = \varkappa_{0} + h\varkappa_{1}
\end{equation*}
%%%%%%%%%%%
with $\varkappa_{0},\varkappa_{1}\in {\mathcal S}(\jap{\xi}^{-m\ell_1-m\ell_2})$, $\varkappa_{0}$ is independent of $h$ and
%%%%%%%%%%
\begin{equation*}
\varkappa_{0}(\w,\xi) = \sigma_{0}(\w,\xi)^{\ell_1}\overline{\sigma_{0}(\w,\xi)^{\ell_2}};
\end{equation*}
%%%%%%%%%%%
\item
$R_N(h)$ is an operator with the integral kernel in 
$C^{N}(\sph\times\sph)$ and the $C^{N}$-norm of $R_N(h)$ is $O(h^{N})$ as $h\to 0$. 
\end{enumerate}

This reduces the problem to the case $\ell_1= 1$, $\ell_2=0$.
\end{proof}
%%%%%%%%%%%%%%%%%%%%
\begin{remark*}
\label{rem.b2}
%%%%%%%%%%%%%%%%%%
In the situation we are interested in, the representation \eqref{eq.b5} 
arises as a corollary of the asymptotic expansion
%%%%%%%%%%%%%%%%%%%%%%
\begin{equation*}
\sigma \sim \sum_{j=0}^{\infty}h^{j}\sigma_{j},\quad\sigma_{j}\in {\mathcal S}(\jap{\xi}^{-m}),
\end{equation*}
%%%%%%%%%%%%%%%%%%%%%%%
but we are only interested in the first term of this expansion.
%%%%%%%%%%%%%%%%%%%
\end{remark*}
%%%%%%%%%%%%%%%%%%%%%%%
In our construction, the $\Psi$DO will be defined in terms of their amplitudes rather than their symbols. 
Thus, we need a statement which is standard in the 
$\Psi$DO theory (see e.g. \cite{SH1}).
%%%%%%%%%%%%%%%%%%%%%%%%
\begin{proposition}
\label{prop.b3}
%%%%%%%%%%%%%%%%%%%%%%%
Let $m>0$, and let $b = b(\w,\w',\xi,h)$ be a smooth function of 
the variables
$(\w,\xi)\in T^{*}\sph$, $\w'\in\sph$ and $h\in (0,1)$.
Assume that $b$ satisfies the estimates
%%%%%%%%%%%%%
\begin{equation}
\label{eq.b7}
%%%%%%%%%%%%%
\vert\partial_{\xi}^{\alpha}\partial_{\w}^{\beta}\partial_{\w'}^{\gamma}b(\w,\w',\xi,h)\vert\leq C_{\alpha\beta\gamma}\jap{\xi}^{-m-\vert\alpha\vert}
%%%%%%%%%%%%%
\end{equation}
%%%%%%%%%%%%%%%%%
for all multi-indices $\alpha,\beta,\gamma$ uniformly in $h\in(0,1)$ . Then for any $N>0$, the operator with the integral kernel
%%%%%%%%%%%%%
\begin{equation}
\label{eq.b8}
%%%%%%%%%%%%%
(2\pi h)^{-d+1}\int_{\Lambda_{\w}}e^{-i\jap{\w-\w',\xi}/h}b(\w,\w',\xi,h)\dd\xi
%%%%%%%%%%%%%
\end{equation}
%%%%%%%%%%%%%%%%%
can be represented as $\Op_{h}[\sigma] + R_N(h)$, 
where the following conditions are met:
\begin{enumerate}[\rm (i)]
\item 
The symbol $\sigma$ can be written as $\sigma = \sigma_{0} + h\sigma_{1}$ 
with $\sigma_{0},\sigma_{1}\in {\mathcal S}(\jap{\xi}^{-m})$ and 
%%%%%%%%%%%%%%
\begin{equation*}
\sigma_{0}(\w,\xi,h) = b(\w,\w,\xi,h).
\end{equation*}
%%%%%%%%%%%%%%%%%%%%%
\item 
The operator $R_N(h)$ has the integral kernel in $C^{N}(\sph\times\sph)$ 
with $C^{N}$-norm satisfying $O(h^{N})$ as $h\to 0$.
\end{enumerate}
%%%%%%%%%%%%%%%%%%%%
\end{proposition}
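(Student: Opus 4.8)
The plan is to reduce the statement to the classical reduction of an amplitude (double symbol) to a symbol in the semiclassical $\Psi$DO calculus, performed in local coordinates on $\sph$, and then to truncate the resulting asymptotic expansion after the first term. The oscillatory integral \eqref{eq.b8} has phase $e^{-i\jap{\w-\w',\xi}/h}=e^{i\jap{\w',\xi}/h}$ (using $\xi\perp\w$), which is stationary precisely on the diagonal $\w=\w'$, so the whole analysis is a localized stationary phase / non-stationary phase argument.

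First I would split the kernel \eqref{eq.b8} into a near-diagonal and an off-diagonal piece by means of a cutoff $\chi(\w,\w')$ equal to $1$ near $\{\w=\w'\}$ and supported where $\abs{\w-\w'}$ is small. On the region where $\abs{\w-\w'}$ is bounded below the phase $\jap{\w',\xi}/h$ is non-stationary in $\xi\in\Lambda_\w$ once $\abs{\xi}\gtrsim\abs{\w-\w'}^{-1}$; combining this with the $\xi$-decay \eqref{eq.b7} and repeated integration by parts in $\xi$ shows that this off-diagonal piece has integral kernel which is $O(h^N)$ in $C^N(\sph\times\sph)$ for every $N$, and hence can be absorbed into $R_N(h)$.

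For the near-diagonal piece I would fix a finite atlas of coordinate charts on $\sph$ with a subordinate partition of unity, and use the identification of $T^{*}\sph$ with the planes $\Lambda_\w$. In each chart the integral becomes, after a change of variables, a semiclassical oscillatory integral on $\R^{d-1}$ with phase $\jap{x-y,\eta}/h$ and an amplitude $a(x,y,\eta,h)$ inheriting estimates of the form \eqref{eq.b7} (with $\jap{\eta}$-weights) uniformly in $h$; the only geometric point to check is that, since $\jap{\w-\w',\xi}=\jap{\w',\xi}$ depends — modulo a quadratic term annihilated by $\xi\perp\w$ — only on the projection of $\w'-\w$ onto $\Lambda_\w$, this projection serves as the local cotangent variable and the phase takes the standard form. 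I would then invoke the standard amplitude-to-symbol reduction: Taylor-expanding $a(x,y,\eta,h)$ in $y$ about $y=x$ and integrating by parts in $\eta$ — each monomial $(y-x)^\alpha$ in the expansion turning into a factor $h^{\abs{\alpha}}\partial_\eta^\alpha$ falling on the amplitude — yields an asymptotic expansion $\sigma\sim\sum_j h^j\sigma_j$ with $\sigma_0(x,\eta,h)=a(x,x,\eta,h)$, i.e. back on the sphere $\sigma_0(\w,\xi,h)=b(\w,\w,\xi,h)$. Truncating after the first term gives $\sigma=\sigma_0+h\sigma_1$ with $\sigma_0,\sigma_1\in\calS(\jap{\xi}^{-m})$, while the tail of the Taylor expansion together with the off-diagonal contribution identified above form a smoothing operator with $C^N$-norm $O(h^N)$.

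The main obstacle is not the formal expansion but the bookkeeping of the symbol estimates: one must verify that the reduced symbol $\sigma$ and the remainder $\sigma-\sigma_0-h\sigma_1$ obey \eqref{eq.b2} with weight $\jap{\xi}^{-m}$ — and the correct shift $\jap{\xi}^{-m-\abs{\alpha}}$ under $\xi$-differentiation — uniformly in $h\in(0,1)$, and that the change of coordinates on $\sph$ and the cutoffs preserve these estimates with $h$-independent constants, so that the oscillatory integrals defining $\sigma_j$ and the remainder converge in the appropriate weighted sense. All of this is entirely standard (see \cite{SH1}), so I would indicate the mechanism and refer to the literature for the routine details.
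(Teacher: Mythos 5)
The paper offers no proof of Proposition~\ref{prop.b3} at all --- it is invoked as a standard fact with a reference to \cite{SH1} --- and your near-diagonal argument is precisely that standard amplitude-to-symbol reduction (Taylor expansion of $b$ at $\w'=\w$, each factor $(\w'-\w)^\alpha$ converted into $h^{\abs{\alpha}}\partial_\xi^\alpha$ by integration by parts, leading term $b(\w,\w,\xi,h)$), so that part is fine. The genuine gap is in your disposal of the off-diagonal region. It is not true that the phase is stationary precisely on the diagonal: for $\xi\in\Lambda_\w$ one has $\jap{\w-\w',\xi}=-\jap{\w',\xi}$, whose gradient in $\xi$ is the orthogonal projection of $\w'$ onto $\Lambda_\w$, and this vanishes both at $\w'=\w$ and at the antipodal configuration $\w'=-\w$. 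Hence near the antipodal set the hypothesis that $\abs{\w-\w'}$ is bounded below buys you nothing: at $\w'=-\w$ the phase in \eqref{eq.b8} is identically zero on $\Lambda_\w$, so the kernel there equals $(2\pi h)^{-d+1}\int_{\Lambda_\w}b(\w,-\w,\xi,h)\dd\xi$, of size $h^{-d+1}$, whereas the kernel of $\Op_{h}[\sigma]$ with $\sigma_{0}(\w,\xi,h)=b(\w,\w,\xi,h)$ equals $(2\pi h)^{-d+1}\int_{\Lambda_\w}\sigma(\w,\xi,h)\dd\xi$ at the same point. Taking, say, $b(\w,\w',\xi,h)=\beta(\w')g(\xi)$ with $g$ Schwartz and $\beta(-\w)\neq\beta(\w)$, the discrepancy near $\{\w'=-\w\}$ is of order $h^{-d+1}$ with coefficient proportional to $\beta(-\w)-\beta(\w)$; it cannot be removed by the correction $h\sigma_{1}$ (which carries an extra factor of $h$) nor absorbed into an $R_N(h)$ whose $C^{N}$-norm is $O(h^{N})$. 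So the repeated integration by parts in $\xi$ closes the argument only for $\w'$ away from both $\w$ and $-\w$, and no amount of bookkeeping will fix the antipodal region as the statement is written globally on $\sph\times\sph$.

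The defect is really one of formulation rather than of your mechanism: the quantization \eqref{eq.b1} on the sphere produces kernels that are large near the antipodal set as well as near the diagonal, and the proposition is meant to be applied --- as it is in step 1 of the proof of Lemma~\ref{prop.b6} --- only after a partition of unity has confined $\w,\w'$ to a cap $\Omega(\w_0)$ of fixed small diameter, where $\w'$ is never close to $-\w$. Your write-up should make this localization explicit: either assume that $b$ is supported, in the variables $(\w,\w')$, in such a cap, or insert a cutoff $\zeta(\jap{\w,\w'})$ supported near the diagonal into both kernels before running the argument. With that restriction your non-stationary phase step is valid wherever $\abs{\w-\w'}$ is bounded below, the Taylor-expansion reduction gives $\sigma_{0}(\w,\xi,h)=b(\w,\w,\xi,h)$ with $\sigma_{1}\in\calS(\jap{\xi}^{-m})$ uniformly in $h$, and the remaining verification of the symbol estimates is routine, as you say, following \cite{SH1}.
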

%%%%%%%%%%%%%%%%%%%
%%%%%%%%%%%%%%%%%%%%%%
\subsection{Approximate solutions to the Schr\"odinger equation}
\label{sec.b2}
%%%%%%%%%%%%%%%%%%%%%%%%%%%%%
Here we recall the construction of approximate solutions to the 
Schr\"odinger equation $Hu = k^{2}u$ from \cite{Ya3}. 
The solutions $u$ are sought as functions 
%%%%%%%%%%%%
\begin{equation*}
u=u(x,p),\quad x\in\R^{d},\quad p\in\R^{d},\quad\vert p\vert = k.
\end{equation*}
%%%%%%%%%%%%%%%%%%%
We denote $\hat{p} = p\vert p\vert^{-1}\in\sph$. Writing 
%%%%%%%%%%%%%%%
\begin{equation*}
u(x,p) = e^{i\Theta(x,p)}\ups(x,p),
\end{equation*}
%%%%%%%%%%%%%%%%%%
we obtain the eikonal equation 
%%%%%%%%%%%%%%%
\begin{equation*}
\vert\nabla\Theta\vert^{2} - 2\jap{A,\nabla\Theta}+(V(x)+\vert A(x)\vert^{2}) =k^{2}
\end{equation*}
%%%%%%%%%%%%%%%%%%
for the phase function $\Theta$ and the transport equation
%%%%%%%%%%%%%%%
\begin{equation*}
-2i\jap{\nabla\Theta,\nabla \ups} + 2i\jap{A,\nabla \ups} 
- \Delta \ups + (-i\Delta\Theta + i\diverge A)\ups = 0
\end{equation*}
%%%%%%%%%%%%%%%%%%
for the amplitude function $\ups$. The approximate solution to the eikonal equation is constructed as
%%%%%%%%%%
\begin{equation}
\label{eq.b9}
%%%%%%%%%%%%%%%%
\Theta_{\pm}(x,p) = \jap{x,p} + \phi_{\pm}(x,\hat{p}),
%%%%%%%%%%%
\end{equation}
%%%%%%%%%%%%%%%%%
%%%%%%%%%%
\begin{equation}
\label{eq.b10}
%%%%%%%%%%%%%%%%
\phi_{\pm}(x,\hat{p}) = \mp\int_{0}^{\infty}\jap{A(x\pm t\hat{p}),\hat{p}}\dd t.
%%%%%%%%%%%%%
\end{equation}
%%%%%%%%%%%%%%%%%
Next, for a given $N\in\N$, the approximate solution $\ups_{\pm}^{(N)}$ to the transport equation is constructed as the asymptotic series
%%%%%%%%%%
\begin{equation}
\label{eq.b11}
%%%%%%%%%%%%%%%%
\ups_{\pm}^{(N)}(x,p) = \sum_{n=0}^{N}(2ik)^{-n}\ups_{n}^{(\pm)}(x,\hat{p}),
%%%%%%%%%%%%%%%%%%
\end{equation}
%%%%%%%%%%%%%%%%%
where $\ups_{0}^{(\pm)}(x,\hat{p}) \equiv 1$ and the coefficients 
$\ups_{n}^{(\pm)}$ are determined via an explicit iterative procedure:
$$
v_{n+1}^{(\pm)}(x,\hat{p})
=
\mp\int_0^\infty f_n^{(\pm)}(x\pm t\hat{p},\hat{p})\dd t,
$$
where
$$
f_n^{(\pm)}
=
2i\jap{A-\nabla\phi_\pm,\nabla v_n^{(\pm)}}
-\Delta v_n^{(\pm)}
+(\abs{\nabla\phi_\pm}^2-2\jap{A,\nabla\phi_\pm}+V_1-i\Delta \phi_\pm)v_n^{(\pm)},
$$
and 
$$
V_1=V+\abs{A}^2+i\diverge A.
$$
The functions $\phi_{\pm}$ and $\ups_{\pm}^{(N)}$ solve the eikonal and transport
equations up to error terms that can be explicitly controlled; we do not need the 
precise statement here, see \cite{Ya3} for the details. 
Here we need $\phi_{\pm}$ and $\ups_{\pm}^{(N)}$ simply as 
``building blocks'' for the approximation to the scattering amplitude, which is 
given in the next subsection. 

When considering the functions $\phi_{\pm}$ and $\ups_{n}^{(\pm)}$, 
we will always exclude a conical neighbourhood of the direction 
$\hat{x} = -\hat{p}$ (for the sign ``$+$'') or $\hat{x} = \hat{p}$ (for the sign ``$-$''). 
Outside these neighbourhoods, the functions $\phi_{\pm}$ and $\ups_{n}^{(\pm)}$, 
$n\geq 1$, decay at infinity in the $x$-variable.
More precisely, the following statement is proven in \cite{Ya3}:
%%%%%%%%%%%%%%%%%
\begin{proposition}\cite{Ya3}
\label{prop.b4}
%%%%%%%%%%%%%%%%%%%
Let assumption \eqref{eq.a1} hold and let $x\in\R^{d}$, 
$\w\in\sph$ be such that $\pm\jap{\hat{x},\w}\geq -1+\varepsilon$ 
for some $\varepsilon>0$. Then the functions $\phi_{\pm}$ 
and $\ups_{n}^{(\pm)}$, $n\geq 1$, satisfy the estimates
%%%%%%%%%%%%%%
\begin{equation}
\label{eq.b12}
%%%%%%%%%%%%%%%%
\vert\partial_{x}^{\alpha}\partial_{\w}^{\beta}\phi_{\pm}(x,\w)\vert
\leq 
C_{\alpha\beta}\jap{x}^{1-\rho-\vert\alpha\vert},
%%%%%%%%%%%%%%%%%%%
\end{equation}
%%%%%%%%%%%%%%%%%%%
%%%%%%%%%%%%%%
\begin{equation}
\label{eq.b13}
%%%%%%%%%%%%%%%%
\vert\partial_{x}^{\alpha}\partial_{\w}^{\beta}\ups_{n}^{(\pm)}(x,\w)\vert
\leq 
C_{\alpha\beta}\jap{x}^{-n-\vert\alpha\vert},
%%%%%%%%%%%%%%%%%%%
\end{equation}
%%%%%%%%%%%%%%%%%%%
for all multi-indices $\alpha,\beta$. 
%%%%%%%%%%%%%%%%%%%
\end{proposition}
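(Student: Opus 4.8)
The plan is to establish both \eqref{eq.b12} and \eqref{eq.b13} by differentiating under the integral sign in \eqref{eq.b10} and in the recursion defining the $v_n^{(\pm)}$, the whole argument resting on one elementary geometric fact about rays that avoid the forward (or backward) direction. Throughout, write $\w$ for the unit vector $\hat p$ occurring in those formulas. The fact is: if $x\in\R^d$ and $\w\in\sph$ satisfy $\langle\hat x,\w\rangle\geq-1+\e$, then for every $t\geq0$
\begin{equation*}
\abs{x+t\w}^2=(\abs{x}-t)^2+2\bigl(1+\langle\hat x,\w\rangle\bigr)\abs{x}\,t\geq(\abs{x}+t)^2-(4-2\e)\abs{x}\,t\geq\tfrac{\e}{2}(\abs{x}+t)^2,
\end{equation*}
where the middle inequality uses $1+\langle\hat x,\w\rangle\geq\e$ and the last uses $\abs{x}\,t\leq\tfrac14(\abs{x}+t)^2$. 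Hence $\jap{x+t\w}\geq c_\e(\jap{x}+t)$ and, in particular, $t\leq c_\e^{-1}\jap{x+t\w}$, with $c_\e>0$ depending only on $\e$. Applying this to the ray $x+t\w$ when $\langle\hat x,\w\rangle\geq-1+\e$, and to the ray $x-t\w=x+t(-\w)$ when $-\langle\hat x,\w\rangle\geq-1+\e$ (that is, $\langle\hat x,-\w\rangle\geq-1+\e$), gives, uniformly for $\w$ outside the excluded cone,
\begin{equation*}
\jap{x\pm t\w}\geq c_\e(\jap{x}+t)\quad\text{and}\quad t\leq c_\e^{-1}\jap{x\pm t\w},\qquad t\geq0.
\end{equation*}

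First I would treat $\phi_\pm$. Differentiating \eqref{eq.b10} under the integral, a multi-index $\partial_x^\alpha$ turns $A(x\pm t\w)$ into $(\partial^\alpha A)(x\pm t\w)=O(\jap{x\pm t\w}^{-\rho-\abs{\alpha}})$ by \eqref{eq.a1}; a derivative $\partial_\w$ (taken in local coordinates on $\sph$) either falls on the argument $x\pm t\w$, producing one extra factor $t$ together with a further $x$-derivative of $A$, or falls on the explicit $\w$ in $\langle A,\w\rangle$ or on the coordinate frame, producing strictly lower-order contributions. Each factor $t$ so produced is absorbed by one power of decay via $t\leq c_\e^{-1}\jap{x\pm t\w}$. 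Thus $\bigl|\partial_x^\alpha\partial_\w^\beta[\langle A(x\pm t\w),\w\rangle]\bigr|\leq C_{\alpha\beta}\jap{x\pm t\w}^{-\rho-\abs{\alpha}}\leq C_{\alpha\beta}'(\jap{x}+t)^{-\rho-\abs{\alpha}}$, and integrating over $t\in(0,\infty)$ — convergent because $\rho+\abs{\alpha}>1$ — yields $\bigl|\partial_x^\alpha\partial_\w^\beta\phi_\pm(x,\w)\bigr|\leq C_{\alpha\beta}''\jap{x}^{1-\rho-\abs{\alpha}}$, which is \eqref{eq.b12}.

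The estimate \eqref{eq.b13} then follows by induction on $n$, starting from $v_0^{(\pm)}\equiv1$. Assume \eqref{eq.b13} for $v_n^{(\pm)}$, so that $\partial_x^\alpha\partial_\w^\beta v_n^{(\pm)}=O(\jap{x}^{-n-\abs{\alpha}})$ and likewise for its first two $x$-derivatives. The source $f_n^{(\pm)}$ is a fixed differential-algebraic combination of $A$, $V$, $\diverge A$, the derivatives of $\phi_\pm$ (controlled by \eqref{eq.b12}) and the derivatives of $v_n^{(\pm)}$ up to second order (controlled by the inductive hypothesis); inspecting its three groups of terms, each carries — relative to $v_n^{(\pm)}$ itself — either two extra $x$-derivatives (the term $-\Delta v_n^{(\pm)}$), or one extra $x$-derivative together with a factor decaying like $\jap{x}^{-\rho}$, $\rho>1$ (the term $2i\langle A-\nabla\phi_\pm,\nabla v_n^{(\pm)}\rangle$), or a coefficient decaying like $\jap{x}^{-\rho}$ with no extra derivative (the term $(\abs{\nabla\phi_\pm}^2-2\langle A,\nabla\phi_\pm\rangle+V_1-i\Delta\phi_\pm)v_n^{(\pm)}$, with $V_1=V+\abs{A}^2+i\diverge A$). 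Consequently $\partial_x^\alpha\partial_\w^\beta f_n^{(\pm)}$ decays, relative to $\jap{x}^{-n-\abs{\alpha}}$, by at least $\min(\rho,2)>1$ extra powers; inserting this into $v_{n+1}^{(\pm)}(x,\w)=\mp\int_0^\infty f_n^{(\pm)}(x\pm t\w,\w)\,\dd t$ and repeating the differentiation-under-the-integral argument above (again absorbing the $t$-powers coming from $\partial_\w$) reproduces a decay one power below that of $f_n^{(\pm)}$, and when $\rho\geq2$ this is exactly $\partial_x^\alpha\partial_\w^\beta v_{n+1}^{(\pm)}=O(\jap{x}^{-(n+1)-\abs{\alpha}})$, closing the induction.

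The pseudodifferential and coordinate bookkeeping on $\sph$ and the repeated differentiation under the integral are routine; the one genuinely substantive ingredient is the mechanism just used to tame the powers of $t$ produced by $\w$-derivatives, which rests on $t\leq c_\e^{-1}\jap{x\pm t\w}$, valid precisely because the ray stays away from the bad direction. I expect the main obstacle to lie in the decay bookkeeping of the inductive step: one must check that \emph{every} term of the concrete expression for $f_n^{(\pm)}$ decays fast enough that, after the one power of $\jap{x}^{-1}$ lost to the ray integration, the target rate $\jap{x}^{-(n+1)-\abs{\alpha}}$ of \eqref{eq.b13} is still attained — this is where the margin in the hypothesis $\rho>1$ is consumed, the slowest-decaying (most dangerous) contribution to $f_n^{(\pm)}$ being the one carrying the electric potential $V$, so that the clean rate of \eqref{eq.b13} emerges when $\rho\geq2$ while for $1<\rho<2$ the same argument gives the weaker, but for all purposes here amply sufficient, rate $\jap{x}^{-(\rho-1)n-\abs{\alpha}}$. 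This argument is, in essence, the one carried out in \cite{Ya3}.
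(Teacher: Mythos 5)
The paper itself gives no proof of this proposition: it is quoted from \cite{Ya3}, so your blind attempt can only be measured against the standard argument — which is essentially what you reproduce. The mechanism (the ray estimate $\jap{x\pm t\w}\geq c_\e(\jap{x}+t)$, hence $t\leq c_\e^{-1}\jap{x\pm t\w}$, differentiation under the integral with each power of $t$ produced by a $\w$-derivative absorbed by the extra decay of the accompanying $x$-derivative of $A$, and induction through the transport recursion) is exactly the right one, and your proof of \eqref{eq.b12} is complete. One detail you should state explicitly for the inductive step: the cone condition is preserved along the ray, i.e. $\pm\langle\widehat{x\pm t\w},\w\rangle$ is nondecreasing in $t\geq 0$, so the inductive bounds may legitimately be evaluated at the shifted points $x\pm t\w$.

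The substantive issue is the one you flag yourself: your induction yields \eqref{eq.b13} with the printed exponent $-n-\abs{\alpha}$ only when $\rho\geq 2$, and for $1<\rho<2$ it gives $\jap{x}^{-n(\rho-1)-\abs{\alpha}}$. Formally this is a gap relative to the statement, which asserts the exponent $-n-\abs{\alpha}$ for all $\rho>1$; but it is not a gap you could close, because the printed rate is false in that range. Indeed, take $A\equiv 0$ and $V(x)=\jap{x}^{-\rho}$ with $1<\rho<2$: then $\phi_\pm\equiv 0$, $f_0^{(\pm)}=V$, and $\ups_1^{(+)}(x,\hat x)=-\int_0^\infty \jap{x+t\hat x}^{-\rho}\,\dd t\asymp -\abs{x}^{1-\rho}$, which is not $O(\jap{x}^{-1})$. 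So \eqref{eq.b13} as transcribed overstates Yafaev's estimate for $1<\rho<2$; what is true, and what your induction establishes, is the rate $\jap{x}^{-nm-\abs{\alpha}}$ with $m=\min\{1,\rho-1\}$. This is also all the paper ever uses: step 5 of the proof of Lemma~\ref{prop.b6} needs only $\jap{x}^{-m-\abs{\alpha}}$ for every $n\geq 1$, uniformly in $k\geq1$, and the weaker rate delivers it (consistently with the paper's own remark that the subleading symbol is $ik^{-1}X(\w,\xi)$, which decays only like $\jap{\xi}^{1-\rho}$). In short: correct method, correct and sufficient conclusion, but for $1<\rho<2$ you have proved a corrected version of the proposition rather than the proposition as printed — and it would strengthen your write-up to say so via the counterexample above rather than presenting the shortfall as a limitation of the bookkeeping.
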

%%%%%%%%%%%%%%%%%%%%
We will write
%%%%%%%%%%%%%%
\begin{equation}
\label{eq.b14}
%%%%%%%%%%%%%%%%
u_{\pm}^{(N)}(x,p) = u_{\pm}(x,p) = e^{i\Theta_{\pm}(x,p)}\ups_{\pm}^{(N)}(x,p).
%%%%%%%%%%%%%%%%%%%
\end{equation}
%%%%%%%%%%%%%%%%%%%
%%%%%%%%%%%%%%%%%%%%%%
\subsection{Approximation to the scattering amplitude}
\label{sec.b3}
%%%%%%%%%%%%%%%%%%%%%%%%%%%%%%%%%%%%%%%%%%%%%%%%%%
Here we recall the approximation to the scattering amplitude obtained in \cite{Ya3}. 
It is known (see \cite{Agmon}) that off the diagonal $\w=\w'$, the integral kernel $s(\w,\w';k)$ 
of the scattering matrix $S(k)$ is a $C^{\infty}$-smooth function of $\w,\w'\in\sph$ and it tends to zero 
faster than any power of $k^{-1}$ as $k\to\infty$. 
Thus, it suffices to describe the structure of $s(\w,\w';k)$ in a neighbourhood of the diagonal $\w=\w'$. 
Fix some $\delta\in(0,1)$; 
for an arbitrary point  $\w_{0}\in\sph$, let 
$\Omega(\w_{0})\subset\sph$ be the conical neighbourhood of $\w_{0}$ given by
%%%%%%%%%%%%%%%
\begin{equation}
\Omega(\w_{0}) = \{\w\in\sph:\jap{\w,\w_{0}}>\delta\}.
\label{eq.b14a}
\end{equation}
%%%%%%%%%%%%%%%%%%%%
Let $u_{\pm}$ be as in \eqref{eq.b14}. We set
$$
\partial_{\omega_0}u=\jap{\nabla u,\omega_0},
$$
where the gradient of $u=u(x,p)$ is taken in the $x$ variable. 
%%%%%%%%%%%%%%%%%%%%%%%%%%%
For $\w,\w'\in\Omega(\omega_0)$, define
%%%%%%%%%%%%%%%%%
%%%%%%%%%%%%%%%%%%%%%%%%%%%%
\begin{multline}
s_{0}^{(N)}(\w,\w';k) = -i\pi k^{d-2}(2\pi)^{-d}\times 
\\
\times\bigg(\int_{\Lambda_{\w_{0}}}
\left[\overline{u_{+}^{(N)}(x,k\w)}(\partial_{\omega_0}u_{-}^{(N)})(x,k\w') 
- 
\overline{(\partial_{\omega_0}u_{+}^{(N)})(x,k\w)}u_{-}^{(N)}(x,k\w')\right]\dd x - 
\\
-2i\int_{\Lambda_{\w_{0}}}
\jap{A(x),\w_{0}}\overline{u_{+}^{(N)}(x,k\w)}u_{-}^{(N)}(x,k\w')\dd x\bigg).
\label{eq.b15}
\end{multline}
%%%%%%%%%%%%%%%%%
The integrals in \eqref{eq.b15} do not converge absolutely and should be understood as oscillatory integrals. In other words, \eqref{eq.b15} should be understood as a distribution on $\Omega(\omega_0)\times\Omega(\omega_0)$. 
%%%%%%%%%%%%%%%%%%
\begin{proposition}\cite{Ya3}
\label{prop.b5}
%%%%%%%%%%%%%%%%%%%%
For any $q\in\N$ there exists $N=N(q)\in\N$ such that for any $\w_{0}\in\sph$, the kernel
%%%%%%%%%%%%%%%
\begin{equation*}
\wt{s}^{(N)}(\w,\w';k) = s(\w,\w';k) - s_{0}^{(N)}(\w,\w';k)
\end{equation*}
%%%%%%%%%%%%%%%%%%
belongs to the class $C^{q}(\Omega(\omega_0)\times\Omega(\omega_0))$, 
and its $C^{q}$-norm is $O(k^{-q})$ as $k\to\infty$.
%%%%%%%%%%%%%%%%%%%%%%%%
\end{proposition}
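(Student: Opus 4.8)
\textbf{Proof proposal for Proposition~\ref{prop.b5}.}

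The plan is to extract, from the (given) full asymptotic expansion machinery of \cite{Ya3}, exactly the statement about the leading term $s_0^{(N)}$ and its error. The starting point is the stationary representation of the scattering amplitude as a bilinear form built from exact scattering solutions, together with the fact (recalled above, from \cite{Agmon}) that $s(\w,\w';k)$ is smooth and $O(k^{-\infty})$ away from the diagonal. Fix $q\in\N$. First I would invoke the construction of Section~\ref{sec.b2}: for a suitable $N=N(q)$ the functions $u_\pm^{(N)}$ of \eqref{eq.b14} solve $Hu=k^2u$ up to a remainder $r_\pm^{(N)}$ which, by Proposition~\ref{prop.b4} and the explicit form of the transport coefficients, satisfies pointwise bounds of the form $\abs{\partial_x^\alpha r_\pm^{(N)}(x,k\w)}\leq C k^{-N}\jap{x}^{-N}$, uniformly for $\w$ in the conical set $\Omega(\w_0)$ and for $x$ away from the bad direction $\mp\w_0$.

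Next I would insert these approximate solutions into the stationary formula for the scattering amplitude. The representation of $s(\w,\w';k)$ as a boundary integral over a hypersurface (here the hyperplane $\Lambda_{\w_0}$) of a current built from two solutions — one outgoing, one incoming — is classical; it produces precisely the combination appearing in \eqref{eq.b15}, namely the Wronskian-type term $\overline{u_+}\,\partial_{\w_0}u_- - \overline{\partial_{\w_0}u_+}\,u_-$ corrected by the magnetic term $-2i\jap{A,\w_0}\overline{u_+}u_-$ (the magnetic correction is exactly what makes the relevant current divergence-free for $(i\nabla+A)^2+V$). Replacing the exact solutions by $u_\pm^{(N)}$ introduces an error which, via Green's formula, is an integral over $\R^d$ of terms each containing at least one factor $r_\pm^{(N)}$; the rapid decay of $r_\pm^{(N)}$ in both $x$ and $k^{-1}$, combined with the decay estimates \eqref{eq.b12}--\eqref{eq.b13} for the other factors, makes this volume integral converge absolutely and be $O(k^{-N+c})$ in $C^q$ for some fixed $c$ depending only on $d$ and $q$. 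Choosing $N$ large enough relative to $q$ then gives the $O(k^{-q})$ bound. One also has to check that the oscillatory integral in \eqref{eq.b15} is well defined as a distribution on $\Omega(\w_0)\times\Omega(\w_0)$ and that $\wt s^{(N)}$ is genuinely a $C^q$ function there; this follows from integration by parts in $x$ exploiting the non-stationarity of the phase $\Theta_+(x,k\w)-\Theta_-(x,k\w')=\jap{x,k(\w-\w')}+\phi_+(x,\w)-\phi_-(x,\w')$ away from $\w=\w'$, together with the smoothing already known off the diagonal.

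The main obstacle is not any single estimate but the bookkeeping of how large $N$ must be: one must track how many $x$-integrations by parts and how many $k$-powers are needed to upgrade the bilinear form into a $C^q$ estimate, since differentiating in $\w,\w'$ brings down factors of $\abs{x}$ (through $k x$ in the phase) that must be absorbed by the $\jap{x}$-decay of the amplitudes and remainders. This is exactly the delicate part of \cite{Ya3}; since the paper explicitly positions itself as a spectral corollary of Yafaev's work, the honest approach here is to quote Proposition~\ref{prop.b5} from \cite{Ya3} and indicate only the above outline, rather than reproduce the full (hard) proof. I would therefore present the argument at the level of a sketch, emphasising the structural points — the choice of approximate solutions, the Green's-formula reduction to a remainder-controlled volume integral, and the off-diagonal integration by parts — and refer to \cite{Ya3} for the quantitative details.
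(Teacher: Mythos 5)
Your conclusion coincides with what the paper itself does: Proposition~\ref{prop.b5} is not proved there at all but imported verbatim from \cite{Ya3}, so quoting it and indicating only a structural outline is exactly the authors' approach. Your sketch of Yafaev's argument is a reasonable summary, with the one caveat that the control of the error terms in \cite{Ya3} rests on limiting-absorption and microlocal resolvent estimates for $H$ that are uniform in $k$ (the truly hard part), rather than on a bare Green's-formula volume integral made convergent by the decay of the remainders $r_{\pm}^{(N)}$ alone.
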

%%%%%%%%%%%%%%%%%
%%%%%%%%%%%%%%%%%%%%%%%%%
\subsection{The scattering matrix as a \protect$\Psi$DO on the sphere}
\label{sec.b4}
%%%%%%%%%%%%%%%%%%%%%%%%%%%%%
Below we represent the scattering matrix $S(k)$ as a semiclassical $\Psi$DO on the sphere. 
The semiclassical parameter is $h=k^{-1}$. 
The idea to represent the scattering matrix as a $\Psi$DO on the sphere goes back
to \cite{BYa1}. The statements almost identical to Lemma~\ref{prop.b6} 
can be found in \cite[Propositions~6.1 and 6.4]{Ya3} and \cite[Section~8.4]{Ya2},
but for technical reasons these statements are not sufficient for our purposes.
A related work (but written from a very different viewpoint) is \cite{Al}, where
the scattering matrix is represented as a Fourier integral operator corresponding
to the classical scattering relation.

%%%%%%%%%%%%%%%%%%
\begin{lemma}
\label{prop.b6}
%%%%%%%%%%%%%%%%%%%%
Let assumptions \eqref{eq.a1} hold, and let $m=\min\{1,\rho-1\}$. 
For any $q\in\N$, 
the scattering matrix can be written as
\begin{equation}
\label{eq.b16}
%%%%%%%%%%%%%%%%%
S(k) = I + \Op_{k^{-1}}[\sigma] + R_q(k),
%%%%%%%%%%%%%%%%
\end{equation}
where:
\begin{enumerate}[\rm (i)]
\item
The symbol $\sigma$
can be represented as 
%%%%%%%%%%%%%%%%%%
\begin{equation}
\label{eq.b17}
%%%%%%%%%%%%%%%%%
\sigma = \sigma_{0} + k^{-1}\sigma_{1},
%%%%%%%%%%%%%%%%
\end{equation}
%%%%%%%%%%%%%%%%%%%
with $\sigma_{0},\sigma_{1}\in {\mathcal S}(\jap{\xi}^{-m})$ and 
%%%%%%%%%%%%%%%%%%
\begin{equation}
\label{eq.b18}
%%%%%%%%%%%%%%%%%
\sigma_{0}(\w,\xi) = \exp(iM(\w,\xi)) - 1;
%%%%%%%%%%%%%%%%
\end{equation}
\item
the operator $R_q(k)$ has an integral kernel in the class 
$C^{q}(\sph\times\sph)$ and its $C^{q}$-norm is $O(k^{-q})$ as $k\to\infty$.\\
\end{enumerate}
%%%%%%%%%%%%%%%%%%%
%%%%%%%%%%%%%%%%%%%%%%%%
\end{lemma}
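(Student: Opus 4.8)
\textbf{Proof plan for Lemma~\ref{prop.b6}.}

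The plan is to start from Yafaev's approximation to the scattering amplitude (Proposition~\ref{prop.b5}), localize it on the sphere, and recognize the resulting oscillatory integral as an amplitude-type operator to which Proposition~\ref{prop.b3} applies. First I would fix $q\in\N$ and choose $N=N(q')$ for a suitably large $q'$ via Proposition~\ref{prop.b5}, so that $s(\w,\w';k)-s_0^{(N)}(\w,\w';k)$ is $O(k^{-q'})$ in $C^{q'}(\Omega(\w_0)\times\Omega(\w_0))$ for every $\w_0$. Then I would take a finite cover of $\sph$ by cones $\Omega(\w_0)$ and a subordinate partition of unity; away from the diagonal the kernel is negligible (smooth and rapidly decaying in $k^{-1}$ by \cite{Agmon}), so it is absorbed into $R_q(k)$. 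On each $\Omega(\w_0)$, I would substitute the explicit solutions $u_\pm^{(N)}(x,p)=e^{i\Theta_\pm(x,p)}\ups_\pm^{(N)}(x,p)$ from \eqref{eq.b14} into \eqref{eq.b15}. Using \eqref{eq.b9}–\eqref{eq.b10}, the combined phase of the integrand, evaluated at $p=k\w$ and $p'=k\w'$, is
\begin{equation*}
k\jap{x,\w'-\w} + \phi_-(x,\w') - \phi_+(x,\w),
\end{equation*}
so writing $h=k^{-1}$, the kernel on $\Lambda_{\w_0}$ takes the form
\begin{equation*}
(2\pi h)^{-d+1}\int_{\Lambda_{\w_0}}e^{-i\jap{\w-\w',\xi}/h}\,b(\w,\w',\xi,h)\,\dd\xi
\end{equation*}
after absorbing the prefactor $-i\pi k^{d-2}(2\pi)^{-d}$ and the Jacobian relating $\R^d$-integration to $\Lambda_{\w_0}$-integration into the amplitude $b$; here $b$ collects $e^{i(\phi_-(x,\w')-\phi_+(x,\w))}$, the amplitudes $\ups_\pm^{(N)}$, the derivative factors $\partial_{\w_0}u_\pm^{(N)}$, and the $\jap{A(x),\w_0}$ term.

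The next step is to verify that $b$ satisfies the amplitude estimates \eqref{eq.b7} with $m=\min\{1,\rho-1\}$, uniformly in $h\in(0,1)$. This is where the decay estimates of Proposition~\ref{prop.b4} enter: on $\Omega(\w_0)$ we have $\pm\jap{\hat x,\w}$ bounded below, so \eqref{eq.b12}–\eqref{eq.b13} give $|\partial_x^\alpha\partial_\w^\beta\phi_\pm(x,\w)|\leq C\jap{x}^{1-\rho-|\alpha|}$ and $|\partial_x^\alpha\partial_\w^\beta\ups_n^{(\pm)}(x,\w)|\leq C\jap{x}^{-n-|\alpha|}$ for $n\geq1$. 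The term with $\ups_0^{(\pm)}\equiv1$ contributes, through the derivative factors $\partial_{\w_0}u_\pm$, either a $\nabla\phi_\pm$ (decay $\jap{x}^{-\rho}$) or a factor $k\jap{\w,\w_0}$ that is part of the main oscillation; the $\jap{A(x),\w_0}$ term decays like $\jap{x}^{-\rho}$ by \eqref{eq.a1}. Collecting exponents one finds each summand of $b$ decays at least like $\jap{\xi}^{-m}$ in the tangential variable after restriction to $\Lambda_{\w_0}$, with the extra powers of $\jap{\xi}^{-|\alpha|}$ under $\xi$-differentiation; the residual $e^{i(\phi_--\phi_+)}$ factor is bounded with all derivatives controlled since $\phi_\pm\to0$. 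The powers of $k$ are organized so that the expansion \eqref{eq.b11} in powers of $(2ik)^{-n}$ yields precisely a symbol of the form $b=b_0+h b_1+\dots$, and one keeps only $b_0+hb_1$, dumping the higher-order tail (which is $O(h^{N-1})$ in the relevant norms, hence negligible once $N$ is large relative to $q$) into $R_q(k)$.

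With \eqref{eq.b7} established, Proposition~\ref{prop.b3} applies on each chart and produces $\Op_h[\sigma]+R_N(h)$ with $\sigma=\sigma_0+h\sigma_1$, $\sigma_0(\w,\xi)=b(\w,\w,\xi,h)|_{\text{leading order}}$. The final computation is to evaluate $b$ on the diagonal $\w'=\w$: there the residual phase becomes $\phi_-(x,\w)-\phi_+(x,\w)=\int_0^\infty\jap{A(x-t\w),\w}\dd t+\int_0^\infty\jap{A(x+t\w),\w}\dd t=\int_{-\infty}^\infty\jap{A(t\w+x),\w}\dd t$, which after restricting $x$ to $\Lambda_{\w}$ (so $x=\xi$) is exactly $M(\w,\xi)$ as in \eqref{eq.a4}. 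Checking that the prefactor $-i\pi k^{d-2}(2\pi)^{-d}$ together with the oscillatory-integral normalization and the diagonal stationary-phase contribution of the $\partial_{\w_0}u$ terms combine to give $\exp(iM(\w,\xi))-1$ is the identity \eqref{eq.b18}; the ``$-1$'' reflects that $S(k)-I$, not $S(k)$, is being represented, and is recovered from the normalization of the oscillatory integral defining $s_0^{(N)}$. Finally one patches the charts back together: differences of partitions of unity on overlaps contribute only lower-order symbols, which is consistent with \eqref{eq.b17}, and the accumulated smoothing remainders form $R_q(k)$ with $C^q$-norm $O(k^{-q})$. I expect the main obstacle to be the bookkeeping in the second step — tracking, term by term in \eqref{eq.b15} and in the series \eqref{eq.b11}, that every contribution genuinely decays like $\jap{\xi}^{-m}$ with $m=\min\{1,\rho-1\}$ and not worse, and that the powers of $k$ line up to give a clean $\sigma_0+h\sigma_1$ splitting rather than an uncontrolled $h$-dependence; the stationary-phase identification of the diagonal symbol with $e^{iM}-1$ is delicate but is essentially already carried out in \cite{Ya3}.
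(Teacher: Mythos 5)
Your overall route is the same as the paper's: localize via Proposition~\ref{prop.b5} and a partition of unity, substitute \eqref{eq.b14} into \eqref{eq.b15}, verify amplitude bounds with Proposition~\ref{prop.b4}, apply Proposition~\ref{prop.b3}, and read off the diagonal value of the amplitude. However, the two steps you treat as routine bookkeeping are exactly where the proof has real content, and as written they are gaps.

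First, the operator $I$ in \eqref{eq.b16}: you say the ``$-1$'' is ``recovered from the normalization of the oscillatory integral'', but you never exhibit the mechanism. The paper isolates it by observing that the amplitude choice $\tfrac12\jap{\w+\w',\w_0}$ in \eqref{eq.b21} produces exactly a $\delta$-function on the sphere, so subtracting it leaves the amplitude \eqref{eq.b25} containing the factor $\exp(i\phi_--i\phi_+)-1$; without this subtraction the leading amplitude does not decay in $\xi$ at all, so the $\calS(\jap{\xi}^{-m})$ framework of Propositions~\ref{prop.b2}--\ref{prop.b3} would not even apply. Second, and more seriously, your oscillatory integral is over $x\in\Lambda_{\w_0}$, whereas \eqref{eq.b1}, \eqref{eq.b8} and Proposition~\ref{prop.b3} require an amplitude defined for $\xi\in\Lambda_\w$; your phrase ``after restricting $x$ to $\Lambda_\w$ (so $x=\xi$)'' is not a legitimate operation, since the integration variable ranges over $\Lambda_{\w_0}$, not $\Lambda_\w$ (and there is no ``Jacobian relating $\R^d$-integration to $\Lambda_{\w_0}$-integration'': \eqref{eq.b15} is already a surface integral). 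The paper's fix is the explicit phase-preserving linear change of variables \eqref{eq.b26} from $\Lambda_\w$ to $\Lambda_{\w_0}$, with $\jap{x,\w-\w'}=\jap{\xi,\w-\w'}$, whose Jacobian satisfies $J(\w,\w)=\jap{\w,\w_0}^{-1}$ and hence cancels the chart-dependent factor $\tfrac12\jap{\w_0,\w+\w'}\big|_{\w'=\w}=\jap{\w,\w_0}$, while the residual shift $x(\xi)=\xi+c\w$ on the diagonal is harmless because $M(\w,\xi+c\w)=M(\w,\xi)$. Without these cancellations the diagonal symbol would come out as $\jap{\w,\w_0}\bigl(e^{iM(\w,\xi)}-1\bigr)$, i.e.\ dependent on the chart $\Omega(\w_0)$, and \eqref{eq.b18} would not follow; note also that this identification is carried out in the proof of the present lemma, not in \cite{Ya3}, so it cannot simply be cited away.
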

%%%%%%%%%%%%%%%%%%%%%
%%%%%%%%%%%%%%%%%%%%

\begin{proof}

1) Let $\psi_1,\psi_2\in C^{\infty}(\sph)$ be functions with disjoint supports. 
Then $\psi_1 S(k)\psi_2$ has a $C^{\infty}$-smooth integral kernel which decays faster 
than any power of $k^{-1}$ as $k\to\infty$. 
The same comment applies to $\psi_1\Op_{k^{-1}}[a]\psi_2$ with $a\in {\mathcal S}(\jap{x}^{-m})$. 
This shows that using a sufficiently fine partition of unity on the sphere, 
one easily reduces the problem to approximating the integral kernel 
of $S(k)$ locally in any conical neighbourhood $\Omega(\omega_0)$, see \eqref{eq.b14a}.
Thus, we can use Proposition~\ref{prop.b5}.

2) Let us rearrange the integrand in \eqref{eq.b15}. Denote
%%%%%%%%%%%%%%%%%%
\begin{equation}
\label{eq.b19}
%%%%%%%%%%%%%%%%%
w_{\pm}(x,p) = e^{i\phi_{\pm}(x,\hat{p})}\ups_{\pm}^{(N)}(x,p),
%%%%%%%%%%%%%%%%
\end{equation}
%%%%%%%%%%%%%%%%%%%
%%%%%%%%%%%%%%%%%%
\begin{equation}
\label{eq.b20}
%%%%%%%%%%%%%%%%%
\wt{w}_{\pm}(x,p) = ke^{i\phi_{\pm}(x,\hat{p})}(\ups_{\pm}^{(N)}(x,p)-1),
%%%%%%%%%%%%%%%%
\end{equation}
%%%%%%%%%%%%%%%%%%%
so that from \eqref{eq.b14},
%%%%%%%%%%%%%%%%%%
\begin{equation*}
u_{\pm}(x,k\omega) 
= 
e^{ik\jap{x,\omega}}w_{\pm}(x,k\omega)
=
e^{ik\jap{x,\omega}}(e^{i\phi_\pm(x,\omega)}+k^{-1} \wt w_\pm(x,k\omega)),
\end{equation*}
%%%%%%%%%%%%%%%%%%
%%%%%%%%%%%%%%%%%%
\begin{equation*}
(\partial_{\omega_0}u_{\pm})(x,k\w) 
= 
e^{ik\jap{x,\w}}
[ik\jap{\w_{0},\w}e^{i\phi_{\pm}(x,\w)} 
+ 
i\jap{\w_{0},\w}\wt{w}_{\pm}(x,k\w) 
+ 
(\partial_{\omega_0}w_{\pm})(x,k\w)].
\end{equation*}
%%%%%%%%%%%%%%%%%%

Now some elementary algebra shows that formula \eqref{eq.b15} can be rewritten as
%%%%%%%%%%%%%%%%%%
\begin{equation}
\label{eq.b21}
%%%%%%%%%%%%%%%%%
s_{0}^{(N)}(\w,\w';k) 
= 
\left(\frac{k}{2\pi}\right)^{d-1}
\int_{\Lambda_{\w_{0}}}e^{-ik\jap{\w-\w',x}}a(\w,\w',x)\dd x,
%%%%%%%%%%%%%%%%
\end{equation}
%%%%%%%%%%%%%%%%%%%
where
%%%%%%%%%%%%%%%%%%
\begin{equation}
\label{eq.b22}
a(\w,\w',x) 
= 
\frac{1}{2}\jap{\w_{0},\w+\w'}\exp\bigl(i\phi_{-}(x,\w') - i\phi_{+}(x,\w)\bigr) + k^{-1}a_{1}(\w,\w',x),
\end{equation}
%%%%%%%%%%%%%%%%%%%
%%%%%%%%%%%%%%%%%
\begin{multline}
\label{eq.b23}
2a_{1}(\w,\w',x) 
=
\jap{\w,\w_{0}}
\bigl(e^{-i\phi_+(x,\omega)}\wt w_-(x,k\omega')+\overline{\wt w_+(x,k\omega)}w_-(x,k\omega')\bigr)
\\
+
\jap{\w',\w_{0}}
\bigl(e^{i\phi_-(x,\omega')}\overline{\wt w_+(x,k\omega)}+\overline{w_+(x,k\omega)}\wt w_-(x,k\omega')\bigr)
\\
+ 
i\overline{(\partial_{\omega_0}w_{+})(x,k\w)} w_{-}(x,k\w')- i\overline{w_{+}(x,k\w)}(\partial_{\omega_0}w_{-})(x,k\w')- 
2\jap{A(x),\w_{0}}\overline{w_{+}(x,k\w)}w_{-}(x,k\w').
\end{multline}
%%%%%%%%%%%%%%%%

%%%%%%%%%%%%%%%%%%%
Note that the choice 
$a(\w,\w',x) = \frac{1}{2}\jap{\w+\w',\w_{0}}$ in \eqref{eq.b21} yields a $\delta$-function on the sphere. 
Thus, we can write
%%%%%%%%%%%%%%%%%%
\begin{equation}
\label{eq.b24}
%%%%%%%%%%%%%%%%%
s_{0}^{(N)}(\w,\w';k) -\delta(\w-\w') 
= 
\left(\frac{k}{2\pi}\right)^{d-1}
\int_{\Lambda_{\w_{0}}}e^{-ik\jap{\w-\w',x}}(a_{0} + k^{-1}a_{1})(\w,\w',x)\dd x,
%%%%%%%%%%%%%%%%
\end{equation}
%%%%%%%%%%%%%%%%%%%
where 
%%%%%%%%%%%%%%%%%%
\begin{equation}
\label{eq.b25}
%%%%%%%%%%%%%%%%%%%%%%%%%
a_{0}(\w,\w',x) 
= 
\frac{1}{2}\jap{\w_{0},\w+\w'}\bigl(\exp(i\phi_{-}(x,\w') - i\phi_{+}(x,\w))-1\bigr).
\end{equation}
%%%%%%%%%%%%%%%%%%%%%%%

3) Let us change variables in the integral \eqref{eq.b24}. Instead of integrating over $x\in\Lambda_{\w_{0}}$, we shall integrate over $\xi\in\Lambda_{\w}$, where
%%%%%%%%%%%%%%%%%
\begin{equation}
\label{eq.b26}
%%%%%%%%%%%%%%%%%%
x = \xi - \frac{\jap{\xi,\w_{0}}}{\jap{\w+\w',\w_{0}}}(\w+\w').
%%%%%%%%%%%%%%%%%%%
\end{equation}
Recall that $\omega,\omega'\in\Omega(\omega_0)$, and so the denominator
in \eqref{eq.b26} does not vanish.
An inspection shows that
%%%%%%%%%%%
\begin{equation*}
\jap{x,\w-\w'} = \jap{\xi,\w-\w'}.
\end{equation*}
%%%%%%%%%%%%%%%%%%%%
Thus, we obtain
%%%%%%%%%%%%%%%%%
\begin{equation}
\label{eq.b27}
%%%%%%%%%%%%%%%%%%
s_{0}^{(N)}(\w,\w';k) - \delta(\w-\w') 
= 
\left(\frac{k}{2\pi}\right)^{d-1}
\int_{\Lambda_{\w}}e^{-ik\jap{\w-\w',\xi}}b(\w,\w',\xi)\dd\xi,
%%%%%%%%%%%%%%%%%%%
\end{equation}
%%%%%%%%%%%%%%%%%%%
where $b=b_{0}+k^{-1}b_{1}$ with
%%%%%%%%%%%%%%%%%
\begin{equation}
\label{eq.b28}
%%%%%%%%%%%%%%%%%%
b_{j}(\w,\w',\xi) = J(\w,\w')a_{j}(\w,\w',x(\xi))
%%%%%%%%%%%%%%%%%%%
\end{equation}
%%%%%%%%%%%%%%%%%%%
and $J(\w,\w')$ denotes the Jacobian of the linear map \eqref{eq.b26} considered as a map from $\Lambda_{\w}$ to $\Lambda_{\w_{0}}$. It is easy to see that $J(\w,\w')$ is a smooth function of $\w,\w'\in\Omega(\omega_0)$ and
%%%%%%%%%%%%%%%%%
\begin{equation}
\label{eq.b29}
%%%%%%%%%%%%%%%%%%
J(\w,\w) = \jap{\w,\w_{0}}^{-1}.
%%%%%%%%%%%%%%%%%%%
\end{equation}
%%%%%%%%%%%%%%%%%%%
4) 
The r.h.s. of \eqref{eq.b27} is a semiclassical $\Psi$DO with the amplitude $b$
and a semiclassical parameter $h=k^{-1}$, 
see \eqref{eq.b8}. 
In order to complete the proof, by Proposition~\ref{prop.b3} if suffices to check the estimates
%%%%%%%%%%%%%%%%%
\begin{equation}
\label{eq.b30}
%%%%%%%%%%%%%%%%%%
\vert\partial_{x}^{\alpha}\partial_{\w}^{\beta}\partial_{\w'}^{\gamma}b_{j}(\w,\w',\xi)\vert\leq C_{\alpha\beta\gamma}\jap{\xi}^{-m-\vert\alpha\vert},
%%%%%%%%%%%%%%%%%%%
\end{equation}
%%%%%%%%%%%%%%%%%%%
for $j=0,1$ and all multi-indices $\alpha,\beta,\gamma$ uniformly over $k\geq 1$, and to check the identity
%%%%%%%%%%%%%%%%%
\begin{equation}
\label{eq.b31}
%%%%%%%%%%%%%%%%%%
b_{0}(\w,\w,\xi) = \exp(iM(\w,\xi)) - 1.
%%%%%%%%%%%%%%%%%%%
\end{equation}
%%%%%%%%%%%%%%%%%%%
Let us first check \eqref{eq.b31}. Recalling the definition \eqref{eq.a4}
of $M$ and the definition \eqref{eq.b10} of $\phi_{\pm}$, we get
%%%%%%%%%%%%%%%%%%%
\begin{equation*}
M(\w,\xi) = \phi_{-}(\xi,\w) - \phi_{+}(\xi,\w).
\end{equation*}
%%%%%%%%%%%%%%%%%%%
From this and \eqref{eq.b25}, \eqref{eq.b28} and \eqref{eq.b29}, we obtain
%%%%%%%%%%%%%%%%%%%
\begin{equation*}
b_{0}(\w,\w,\xi) = J(\w,\w)a_{0}(\w,\w,x(\xi)) = \exp\bigl(iM(\w,x(\xi))\bigr) - 1,
\end{equation*}
%%%%%%%%%%%%%%%%%%%
where $x(\xi)$ is the linear map \eqref{eq.b26}.
Next, by the definition of the map  $x(\xi)$, for $\omega=\omega'$ 
it takes the form 
$x(\xi) =  \xi + c\w$, and by the definition of the function $M$ we have
%%%%%%%%%%%%%%%%%%%
\begin{equation*}
M(\w,\xi+c\w) = M(\w,\xi).
\end{equation*}
%%%%%%%%%%%%%%%%%%%
Thus, we obtain \eqref{eq.b31}.\\
5) It remains to check that the estimates \eqref{eq.b30} are satisfied. This is essentially a consequence of Proposition~\ref{prop.b4}; let us check this. 
Recalling that $m=\min\{1,\rho-1\}$ and using the estimates \eqref{eq.b12}, \eqref{eq.b13}, we obtain
%%%%%%%%%%%%%%%%%%%
\begin{align*}
\vert\partial_{x}^{\alpha}\partial_{\w}^{\beta}\phi_{\pm}(x,\w)\vert &\leq C_{\alpha\beta}\jap{x}^{-m-\vert\alpha\vert},\\
\vert\partial_{x}^{\alpha}\partial_{\w}^{\beta}\wt{w}_{\pm}(x,k\w)\vert &\leq C_{\alpha\beta}\jap{x}^{-m-\vert\alpha\vert},\\
\vert\partial_{x}^{\alpha}\partial_{\w}^{\beta}(\partial_{\omega_0}w_{\pm})(x,k\w)\vert &\leq C_{\alpha\beta}\jap{x}^{-m-\vert\alpha\vert},\\
\vert\partial_{x}^{\alpha}\partial_{\w}^{\beta}w_{\pm}(x,k\w)\vert &\leq C_{\alpha\beta}\jap{x}^{-\vert\alpha\vert},
\end{align*}
%%%%%%%%%%%%%%%%%%%
where all the estimates are uniform in $k\geq 1$. It follows that $a_{0}$ and $a_{1}$, defined by \eqref{eq.b25}, \eqref{eq.b23} respectively, satisfy
%%%%%%%%%%%%%%%%%
\begin{equation}
\label{eq.b32}
%%%%%%%%%%%%%%%%%%
\vert\partial_{x}^{\alpha}\partial_{\w}^{\beta}\partial_{\w'}^{\gamma}a_{j}(\w,\w',x)\vert \leq C_{\alpha\beta}\jap{x}^{-m-\vert\alpha\vert}
%%%%%%%%%%%%%%%%%%%
\end{equation}
%%%%%%%%%%%%%%%%%%%
uniformly in $k\geq 1$. 
Now from \eqref{eq.b26}, \eqref{eq.b28} and  \eqref{eq.b32} by an elementary calculation we obtain \eqref{eq.b30}.
\end{proof}
%%%%%%%%%%%%%%%%%%%%%%
%%%%%%%%%%%%%%%%%%%%%%%%%
%%%%%%%%%%%%%%%%%%%%
\section{Proof of Theorem~\ref{thm.a1}}
\label{sec.c}
%%%%%%%%%%%%%%%%%%%%%%%
%%%%%%%%%%%%%%%%%%%%%%%%%%%%%%%%%%%%%%%%%%
\subsection{The case of a monomial \protect$\varphi$}
\label{sec.c1}
%%%%%%%%%%%%%%%%%%%%%%%%%%%%%%%%%%%%%%%
%%%%%%%%%%%%%%%%%%%%%
\begin{lemma}
\label{prop.c1}
%%%%%%%%%%%%%%%%%%%%%%%%
Assume the hypothesis of Theorem~\ref{thm.a1}.
Then for any integers $\ell_1\geq0$, $\ell_2\geq0$ such that 
$\ell_1+\ell_2>(d-1)/m$, $m=\min\{1,\rho-1\}$, the asymptotic formula 
\eqref{eq.c1} holds true. 
\end{lemma}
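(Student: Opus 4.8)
The plan is to substitute the representation of Lemma~\ref{prop.b6} into Proposition~\ref{prop.b2} and then discard the smoothing remainder by a soft trace-norm estimate; the substantive analytic work has already been done in those two statements, so this step is essentially bookkeeping. First I would fix $q\in\N$ large enough that any operator on the compact manifold $\sph$ whose integral kernel lies in $C^{q}(\sph\times\sph)$ is trace class with trace norm bounded by a constant times the $C^{q}$-norm of the kernel (such $q$ exists and depends only on $d$). By Lemma~\ref{prop.b6} I would write $S(k)=I+T(k)+R_q(k)$, where $T(k)=\Op_{k^{-1}}[\sigma]$ with $\sigma=\sigma_{0}+k^{-1}\sigma_{1}$, $\sigma_{0},\sigma_{1}\in\calS(\jap{\xi}^{-m})$, $\sigma_{0}(\w,\xi)=e^{iM(\w,\xi)}-1$, and $R_q(k)$ has a $C^{q}$-kernel of $C^{q}$-norm $O(k^{-q})$. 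Taking adjoints gives $S(k)^{*}-I=T(k)^{*}+R_q(k)^{*}$, with $R_q(k)^{*}$ obeying the same bound. Distributing the product
\[
(S(k)-I)^{\ell_1}(S(k)^{*}-I)^{\ell_2}=\bigl(T(k)+R_q(k)\bigr)^{\ell_1}\bigl(T(k)^{*}+R_q(k)^{*}\bigr)^{\ell_2}
\]
then produces the \emph{main term} $T(k)^{\ell_1}\bigl(T(k)^{*}\bigr)^{\ell_2}$ together with $2^{\ell_1+\ell_2}-1$ remainder terms, each an ordered product of $\ell_1+\ell_2$ factors containing at least one occurrence of $R_q(k)$ or $R_q(k)^{*}$.

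For the main term I would invoke Proposition~\ref{prop.b2} with this $\sigma$, semiclassical parameter $h=k^{-1}$, and $m=\min\{1,\rho-1\}>0$: the decomposition $\sigma=\sigma_{0}+h\sigma_{1}$ with $\sigma_{0}$ independent of $h$ is precisely \eqref{eq.b17}--\eqref{eq.b18}, and the condition $\ell_1+\ell_2>(d-1)/m$ is the hypothesis of the lemma. This yields that $T(k)^{\ell_1}\bigl(T(k)^{*}\bigr)^{\ell_2}$ is trace class and, dividing \eqref{eq.b6} by $(2\pi)^{d-1}$ and setting $h=k^{-1}$,
\[
\lim_{k\to\infty}k^{-d+1}\Tr\bigl(T(k)^{\ell_1}(T(k)^{*})^{\ell_2}\bigr)=(2\pi)^{-d+1}\int_{\sph}\int_{\Lambda_{\w}}\sigma_{0}(\w,\xi)^{\ell_1}\,\overline{\sigma_{0}(\w,\xi)^{\ell_2}}\,\dd\xi\,\dd\w.
\]
Since $M(\w,\xi)$ is real, $\overline{\sigma_{0}(\w,\xi)^{\ell_2}}=(e^{-iM(\w,\xi)}-1)^{\ell_2}$, so the right-hand side is exactly that of \eqref{eq.c1}.

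It remains to show that $k^{-d+1}$ times the trace of each of the $2^{\ell_1+\ell_2}-1$ remainder terms tends to $0$. The two ingredients are: $T(k)$, and hence $T(k)^{*}$, is bounded uniformly in $k$ by \eqref{eq.b3b} (applicable since $m>0$); and $R_q(k)$, $R_q(k)^{*}$ have trace norm $O(k^{-q})$ by the choice of $q$. Writing a generic such term as a product in which at least one factor has trace norm $O(k^{-q})$ while the rest are uniformly bounded, the ideal property of the trace class — in the forms $\norm{AB}_{1}\leq\norm{A}\,\norm{B}_{1}$ and $\norm{AB}_{1}\leq\norm{A}_{1}\,\norm{B}$ — shows that the term is trace class with trace norm $O(k^{-q})$, hence contributes $O(k^{-d+1-q})\to 0$ to $k^{-d+1}\Tr$. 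Combining with the main term shows that $(S(k)-I)^{\ell_1}(S(k)^{*}-I)^{\ell_2}$ is trace class and that \eqref{eq.c1} holds. I do not expect a real obstacle here: the only point needing a little care is checking in each remainder monomial that a smoothing factor occupies a slot where the ideal property applies, which is automatic since any factor may play either role in the inequalities above; in particular the electric potential $V$ has already been eliminated at the level of the principal symbol $\sigma_{0}=e^{iM}-1$.
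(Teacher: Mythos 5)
Your proposal is correct and follows essentially the same route as the paper: decompose $S(k)-I$ via Lemma~\ref{prop.b6}, apply Proposition~\ref{prop.b2} to the main term $(\Op_{k^{-1}}[\sigma])^{\ell_1}((\Op_{k^{-1}}[\sigma])^*)^{\ell_2}$, and kill the remaining monomials by combining the uniform bound \eqref{eq.b3b} with the trace-class bound on $R_q(k)$ obtained from its $C^q$ kernel. The only cosmetic difference is that you track the remainder as $O(k^{-q})$ in trace norm, whereas the paper is content with showing $\Tr Q_q(k)=O(1)$, which already suffices after multiplying by $k^{-d+1}$.
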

\begin{proof}
By Lemma~\ref{prop.b6}, we have 
\begin{equation}
(S(k)-I)^{\ell_1}(S(k)^*-I)^{\ell_2}
=
(\Op_{k^{-1}}[\sigma]+R_q(k))^{\ell_1}
\bigl((\Op_{k^{-1}}[\sigma])^*+R_q(k)^*\bigr)^{\ell_2},
\label{eq.c1a}
\end{equation}
where $\sigma$, $R_q(k)$ are as described in Lemma~\ref{prop.b6}.
Expanding the brackets in \eqref{eq.c1a}, we obtain  
\begin{equation}
(S(k)-I)^{\ell_1}(S(k)^*-I)^{\ell_2}
=
(\Op_{k^{-1}}[\sigma])^{\ell_1}
\bigl((\Op_{k^{-1}}[\sigma])^*\bigr)^{\ell_2}
+Q_q(k),
\label{eq.c4}
\end{equation}
where $Q_q(k)$ is the sum of the products of operators, to be estimated below. 
For the first term in the r.h.s in \eqref{eq.c4}, 
by Proposition~\ref{prop.b2}, we have
\begin{multline*}
\lim_{k\to\infty}\left(\frac{k}{2\pi}\right)^{-d+1}
\Tr\bigl((\Op_{k^{-1}}[\sigma])^{\ell_1} \bigl((\Op_{k^{-1}}[\sigma])^*\bigr)^{\ell_2}\bigr)
\\
= 
\int_{\sph}\int_{\Lambda_{\w}}
\bigl(e^{iM(\w,\xi)}-1\bigr)^{\ell_1}\bigl(e^{-iM(\w,\xi)}-1\bigr)^{\ell_2}
\dd\xi\dd\w.
\end{multline*}
Let us check that by a suitable choice of $q$ we can ensure that the error term 
$\Tr Q_q(k)$ remains bounded as $k\to\infty$; this will certainly yield 
the desired asymptotics \eqref{eq.c1}. 
By choosing $q$ sufficiently large, we can make sure that the estimate
($\norm{\cdot}_1$ is the trace norm)
\begin{equation}
\norm{R_q(k)}_1=O(1), 
\quad k\to\infty,
\label{eq.c1b}
\end{equation}
holds true.
Next, using the estimates
$$
\abs{\Tr(AB)}
\leq
\norm{A}\norm{B}_1
\text{ and }
\norm{C}\leq \norm{C}_1
$$
and recalling that $Q_q(k)$ arose as a remainder term in the expansion 
of the brackets in the l.h.s. of \eqref{eq.c1a}, we obtain  
\begin{equation}
\label{eq.c3}
%%%%%%%%%%%%
\abs{\Tr(Q_q(k))}
\leq 
C(\ell_1,\ell_2)
\max_{1\leq j\leq \ell_1+\ell_2}
\{
\norm{R_q(k)}^{j}_{1}, \Norm{\Op_{k^{-1}}[\sigma]}^{\ell_1+\ell_2-j}\}.
%%%%%%%%
\end{equation}
By \eqref{eq.b3b}, we have
$$
\norm{\Op_{k^{-1}}[\sigma]}=O(1),
\quad k\to\infty.
$$
Combining the last inequality with \eqref{eq.c1b}, we obtain
that $\Tr(Q_q(k))$ is bounded as $k\to\infty$,
as required. 
\end{proof}

%%%%%%%%%%%%%%

%%%%%%%%%%%%%%%%%%
\subsection{Application of the Weierstrass approximation theorem}
\label{sec.c2}
%%%%%%%%%%%%%%%%%%%
%%%%%%%%%%%%%%%%%%
\begin{lemma}
\label{lem.c3}
%%%%%%%%%%%%%%%%%
Let $\ell_0$ be an even natural number and let $\nu$, $\nu_k$ ($k\geq1$) 
be $\sigma$-finite measures on $\T\setminus\{1\}$ such that 
\begin{equation}
\int_\T \abs{z-1}^{\ell_0}\dd \nu(z)<\infty,
\quad
\int_\T \abs{z-1}^{\ell_0}\dd \nu_k(z)<\infty,
\label{eq.c14a}
\end{equation}
for all $k$. 
Suppose that for all integers $\ell_1\geq0$, $\ell_2\geq0$ 
such that $\ell_1+\ell_2\geq \ell_0$, the relation 
\begin{equation}
\lim_{k\to\infty}
\int_\T (z-1)^{\ell_1}(\overline{z}-1)^{\ell_2}d\nu_k(z)
=
\int_\T (z-1)^{\ell_1}(\overline{z}-1)^{\ell_2}d\nu(z)
\label{eq.c14}
\end{equation}
holds true. 
Then for any $\varphi\in C(\mathbb{T})$ such that $\varphi(z)\abs{z-1}^{-\ell_0}$ 
is continuous on $\T$, the relation 
%%%%%%%%%%%%%%%
\begin{equation}
\label{eq.c15}
%%%%%%%%%%%%%%%
\lim_{k\to\infty}
\int_{\mathbb{T}}\varphi(z)\dd\nu_{k}(z) 
= 
\int_{\mathbb{T}}\varphi(z)\dd\nu(z).
%%%%%%%%%%%%%%%
\end{equation}
holds true. 
%%%%%%%%%%%%%%%
\end{lemma}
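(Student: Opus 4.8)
The plan is to use the Weierstrass approximation theorem to reduce the convergence \eqref{eq.c15} to the hypothesis \eqref{eq.c14}, with the function $\abs{z-1}^{\ell_0}$ playing the role of a weight that absorbs the singularity at $z=1$. First I would set $g(z)=\varphi(z)\abs{z-1}^{-\ell_0}$, which by assumption is continuous on $\T$; thus $\varphi(z)=\abs{z-1}^{\ell_0}g(z)=(z-1)^{\ell_0/2}(\overline z-1)^{\ell_0/2}g(z)$ on $\T$, using $\abs{z-1}^2=(z-1)(\overline z-1)$ and the fact that $\ell_0$ is even. The idea is then to approximate $g$ uniformly on $\T$ by trigonometric polynomials, i.e. by finite linear combinations of $z^n$, $n\in\Z$, or equivalently (since $\overline z=z^{-1}=(z-1+1)^{-1}$ is a limit of polynomials in $(z-1)$... better:) by finite linear combinations of $(z-1)^a(\overline z-1)^b$ with $a,b\geq0$. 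Indeed, the algebra generated by $1$, $(z-1)$ and $(\overline z-1)$ contains $z$ and $\overline z$, separates points of $\T$ and is closed under conjugation, so by Stone–Weierstrass it is dense in $C(\T)$.

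The main steps are then as follows. Given $\e>0$, choose a polynomial $P(z,\overline z)=\sum_{a,b} c_{ab}(z-1)^a(\overline z-1)^b$ (finite sum, $a,b\geq0$) with $\norm{g-P}_{C(\T)}<\e$. Multiplying through by $(z-1)^{\ell_0/2}(\overline z-1)^{\ell_0/2}$, we get
\begin{equation*}
\Bigl|\varphi(z)-\sum_{a,b}c_{ab}(z-1)^{a+\ell_0/2}(\overline z-1)^{b+\ell_0/2}\Bigr|
\leq \e\,\abs{z-1}^{\ell_0},\quad z\in\T.
\end{equation*}
Call the polynomial on the left $\Phi(z)=\sum_{a,b}c_{ab}(z-1)^{\ell_1^{(a)}}(\overline z-1)^{\ell_2^{(b)}}$ with $\ell_1^{(a)}=a+\ell_0/2$, $\ell_2^{(b)}=b+\ell_0/2$; note $\ell_1^{(a)}+\ell_2^{(b)}\geq \ell_0$, so \eqref{eq.c14} applies term by term and $\int_\T\Phi\,d\nu_k\to\int_\T\Phi\,d\nu$ as $k\to\infty$. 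Also, by the triangle inequality and \eqref{eq.c14a}, for every $k$ and for $\nu$,
\begin{equation*}
\Bigl|\int_\T(\varphi-\Phi)\,d\nu_k\Bigr|\leq \e\int_\T\abs{z-1}^{\ell_0}\,d\nu_k,
\qquad
\Bigl|\int_\T(\varphi-\Phi)\,d\nu\Bigr|\leq \e\int_\T\abs{z-1}^{\ell_0}\,d\nu,
\end{equation*}
so that $\bigl|\int\varphi\,d\nu_k-\int\varphi\,d\nu\bigr|\leq \bigl|\int\Phi\,d\nu_k-\int\Phi\,d\nu\bigr|+\e\bigl(\int\abs{z-1}^{\ell_0}d\nu_k+\int\abs{z-1}^{\ell_0}d\nu\bigr)$. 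Taking $\limsup_{k\to\infty}$ kills the first term, and the second is $\leq \e(C+\int\abs{z-1}^{\ell_0}d\nu)$ once we know $\int_\T\abs{z-1}^{\ell_0}d\nu_k$ is bounded uniformly in $k$. Since $\e$ is arbitrary, \eqref{eq.c15} follows.

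The one point that needs genuine care — and which I expect to be the main obstacle — is establishing the uniform bound $\sup_k\int_\T\abs{z-1}^{\ell_0}\,d\nu_k<\infty$. This does not follow from \eqref{eq.c14a} alone (that only gives finiteness for each fixed $k$), but it does follow from the convergence hypothesis \eqref{eq.c14} applied with $\ell_1=\ell_2=\ell_0/2$: then $\int_\T\abs{z-1}^{\ell_0}d\nu_k=\int_\T(z-1)^{\ell_0/2}(\overline z-1)^{\ell_0/2}d\nu_k$ converges to the finite limit $\int_\T\abs{z-1}^{\ell_0}d\nu$, hence is a bounded sequence. (Here I use crucially that $\ell_0$ is even, so that $\ell_0/2$ is a nonnegative integer and $\abs{z-1}^{\ell_0}=(z-1)^{\ell_0/2}(\overline z-1)^{\ell_0/2}$ is exactly one of the monomials covered by \eqref{eq.c14}.) With this bound in hand the argument above is a routine $\e/3$-type estimate, and the only other mild subtlety is checking that all the integrals over the $\sigma$-finite measures are absolutely convergent, which is guaranteed by \eqref{eq.c14a} together with the fact that each monomial $(z-1)^{\ell_1}(\overline z-1)^{\ell_2}$ appearing is bounded by $2^{\ell_1+\ell_2-\ell_0}\abs{z-1}^{\ell_0}$ on $\T$ when $\ell_1+\ell_2\geq\ell_0$.
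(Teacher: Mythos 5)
Your proof is correct and follows essentially the same route as the paper: apply Weierstrass (Stone--Weierstrass) approximation to the continuous function $\varphi(z)\abs{z-1}^{-\ell_0}$ and multiply back by $\abs{z-1}^{\ell_0}=(z-1)^{\ell_0/2}(\overline{z}-1)^{\ell_0/2}$, so that only monomials $(z-1)^{\ell_1}(\overline{z}-1)^{\ell_2}$ with $\ell_1+\ell_2\geq\ell_0$ appear and \eqref{eq.c14} applies term by term. The only (minor) divergence is in handling the $\e$-error: you need the uniform bound $\sup_k\int_\T\abs{z-1}^{\ell_0}\,\dd\nu_k<\infty$, which you correctly extract from \eqref{eq.c14} with $\ell_1=\ell_2=\ell_0/2$, whereas the paper sandwiches $\Re\varphi$ between $(\Re\varphi_0\pm\e)\abs{z-1}^{\ell_0}$ so that the error term is only ever integrated against $\nu$; both devices are valid.
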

%%%%%%%%%%%%%%%
\begin{proof}
%%%%%%%%%%%%%%%
Applying the Weierstrass approximation theorem to the  function $\varphi(z)\abs{z-1}^{-\ell_0}$, 
for any $\e>0$ we obtain a polynomial $\varphi_0(z)$ in $z$, $\overline{z}$ such that
%%%%%%%%%%%%%%%
\begin{equation*}
\vert\varphi(z)\vert z-1\vert^{-\ell_{0}}-\varphi_{0}(z)\vert\leq\e,\quad\forall z\in \mathbb{T}.
\end{equation*}
%%%%%%%%%%%%%%%
Let us define $\varphi_{\pm}(z)=(\Re\varphi_{0}(z)\pm\e)\vert z-1\vert^{\ell_{0}}$, then it follows from the above that
%%%%%%%%%%%%%%%
\begin{equation}
\label{eq.c16}
%%%%%%%%%%%%%%%
\varphi_{-}(z)\leq\Re\varphi(z)\leq\varphi_{+}(z)\quad\forall z\in \mathbb{T},
%%%%%%%%%%%%%%%
\end{equation}
%%%%%%%%%%%%%%%
\begin{equation}
\label{eq.c17}
%%%%%%%%%%%%%%%
\varphi_{+}(z)-\varphi_{-}(z)=2\e\vert z-1\vert^{\ell_{0}}.
%%%%%%%%%%%%%%%
\end{equation}
%%%%%%%%%%%%%%%
By the construction of $\varphi_\pm$, 
it can be represented as a polynomial in $w=z-1$, $\overline{w}=\overline{z}-1$
involving only products $w^{\ell_1}\overline{w}^{\ell_2}$
with $\ell_1+\ell_2\geq \ell_0$. 
Thus, by \eqref{eq.c16}, \eqref{eq.c17} we can write
\begin{gather}
\int_{\mathbb{T}}\varphi_-(z)\dd\nu(z)
\leq
\int_{\mathbb{T}}\Re\varphi(z)\dd\nu(z)
\leq
\int_{\mathbb{T}}\varphi_+(z)\dd\nu(z),
\label{eq.c17a}
\\
\int_{\mathbb{T}}\varphi_-(z)\dd\nu_k(z)
\leq
\int_{\mathbb{T}}\Re\varphi(z)\dd\nu_k(z)
\leq
\int_{\mathbb{T}}\varphi_+(z)\dd\nu_k(z),
\label{eq.c17c}
\\
\int_{\mathbb{T}}\varphi_+(z)\dd\nu(z)
-
\int_{\mathbb{T}}\varphi_-(z)\dd\nu(z)
=
2\e\int_{\T}\abs{z-1}^{\ell_0}d\nu(z),
\label{eq.c17b}
\end{gather}
where all integrals are absolutely convergent by \eqref{eq.c14a}.
Now we can use \eqref{eq.c14} to pass to the limit in \eqref{eq.c17c}.
Using \eqref{eq.c17a}, \eqref{eq.c17b} and denoting 
by $C$ the value of the integral in the r.h.s. of \eqref{eq.c17b}, we obtain
%%%%%%%%%%%%%%%
\begin{gather*}
\limsup_{k\to\infty}\int_{\mathbb{T}}\Re\varphi(z)\dd\nu_{k}(z)
\leq
\int_{\mathbb{T}}\varphi_{+}(z)\dd\nu(z)
\leq
\int_{\mathbb{T}}\Re\varphi(z)\dd\nu(z)+2\e C,
\\
\liminf_{k\to\infty}\int_{\mathbb{T}}\Re\varphi(z)\dd\nu_{k}(z)
\geq
\int_{\mathbb{T}}\varphi_{-}(z)\dd\nu(z)
\geq
\int_{\mathbb{T}}\Re\varphi(z)\dd\nu(z)-2\e C.
\end{gather*}
Since $\e>0$ may be taken arbitrary small, this yields
$$
\lim_{k\to\infty}
\int_{\T}
\Re \varphi(z)\dd\nu_k(z)
=
\int_{\T}\Re \varphi(z)\dd\nu(z).
$$
Since the same argument can be applied to the imaginary part of $\varphi$, 
we obtain the required statement. 
\end{proof}
%%%%%%%%%%%%%%%%%%%%%
%%%%%%%%%%%%%%%%%%%%%%

\subsection{Proof of Theorem~\ref{thm.a1}}
%%%%%%%%%%%%%%%%%%%%%%%%%
Recall that the measures $\mu_k$ and $\mu$ are defined in \eqref{eq.muk} and \eqref{eq.a6}.
By \eqref{eq.a7a}, \eqref{eq.a7b}, 
the conclusion of Lemma~\ref{prop.c1} can be written as 
$$
\lim_{k\to\infty}
k^{-d+1}
\int_\T (z-1)^{\ell_1}(\overline{z}-1)^{\ell_2}d\mu_k(z)
=
\int_\T (z-1)^{\ell_1}(\overline{z}-1)^{\ell_2}d\mu(z).
$$
Now it remains to apply Lemma~\ref{lem.c3} with 
$\nu_k=k^{-d+1}\mu_k$ and $\nu=\mu$. 

\section*{Acknowledgements}
The authors are grateful to N.~Filonov and D.~Yafaev for making valuable 
remarks on the text of the preliminary version of the  paper.

%%%%%%%%%%%%%%%%%%%%%%%%%%

\end{document}